\newtheorem{theorem}{Theorem}[section]
\newtheorem{lemma}[theorem]{Lemma}
\newtheorem{proposition}[theorem]{Proposition}
\newtheorem{assumption}[theorem]{Assumption}
\theoremstyle{definition}
\newtheorem{example}[theorem]{Example}
\newtheorem{remark}[theorem]{\textbf{Remark}}
\numberwithin{equation}{section}
\renewcommand{\P}{{\mathbb P}}
\newcommand{\E}{{\mathbb E}}
\newcommand{\Q}{{\mathbb Q}}
\newcommand{\R}{{\mathbb R}}
\newcommand{\N}{{\mathbb N}}
\newcommand{\Fcal}{{\mathcal F}}
\newcommand{\Gcal}{{\mathcal G}}
\newcommand{\Wcal}{{\mathcal W}}
\newcommand{\Mid}{{\ \Big|\ }}
\newcommand{\ir}{{i}}
\newcommand{\jr}{{j}}
\newcommand{\ib}{{\bm i}}
\newcommand{\jb}{{\bm j}}
\newcommand{\nG}{{G}}
\numberwithin{equation}{section}
\numberwithin{theorem}{section}
\begin{document}

\title{Propagation of chaos for maxima of particle systems with mean-field drift interaction\footnote{The authors would like to thank Dan Lacker for valuable input, in particular for communicating the elegant argument presented in Remark~\ref{R_Lacker}. Several insightful comments by an anonymous referee are gratefully acknowledged. This work has been partially supported by the National Science Foundation under grant NSF DMS-2206062.}}
\author{Nikolaos Kolliopoulos\thanks{Department of Mathematical Sciences, Carnegie Mellon University, \url{nkolliop@andrew.cmu.edu}.}
\and
Martin Larsson\thanks{Department of Mathematical Sciences, Carnegie Mellon University, \url{larsson@cmu.edu}.}
\and
Zeyu Zhang\thanks{Department of Mathematical Sciences, Carnegie Mellon University, \url{zeyuzhan@andrew.cmu.edu}.}
}
\maketitle

\begin{abstract}
We study the asymptotic behavior of the normalized maxima of real-valued diffusive particles with mean-field drift interaction. Our main result establishes propagation of chaos: in the large population limit, the normalized maxima behave as those arising in an i.i.d. system where each particle follows the associated McKean--Vlasov limiting dynamics. Because the maximum depends on all particles, our result does not follow from classical propagation of chaos, where convergence to an i.i.d. limit holds for any fixed number of particles but not all particles simultaneously. The proof uses a change of measure argument that depends on a delicate combinatorial analysis of the iterated stochastic integrals appearing in the chaos expansion of the Radon--Nikodym density.
\end{abstract}


\section{Introduction}

This paper is concerned with the large-population asymptotics of the maxima of certain real-valued diffusive particle systems $X^{1,N},\ldots,X^{N,N}$ with mean-field interaction through the drifts. Specifically, we are interested in large-$N$ limits of
\begin{equation} \label{eq_normed_max}
\max_{i \le N} \frac{X^{i,N}_T - b^N_T}{a^N_T},
\end{equation}
where $a^N_T$ and $b^N_T$ are suitable normalizing constants. The particle dynamics are specified as follows, specializing the setup of \cite{JA19}. For each $N \in \N$ the $N$-particle system evolves according to a stochastic differential equation of the form
\begin{equation} \label{eq_particle_SDE}
dX^{i,N}_t = A(t, X^{i,N}_{[0,t]}) \left( B\left( t, X^{i,N}_{[0,t]}, \int g(t, X^{i,N}_{[0,t]}, \bm y_{[0,t]}) \mu^N_t(d\bm y) \right) dt + dW^i_t \right) + C(t, X^{i,N}_{[0,t]}) dt
\end{equation}
for $i=1,\ldots,N$, with i.i.d.\ initial conditions $X^{i,N}_0 \sim \nu_0$ where $\nu_0$ is a given probability measure on $\R$. We use the notation $\bm x_{[0,t]} = (x(s))_{s \in [0,t]}$ for any continuous function $\bm x$, and for each $t \in \R_+$ we let
\[
\mu^N_t = \frac1N \sum_{i=1}^N \delta_{X^{i,N}_{[0,t]}}
\]
denote the empirical measure of the particle trajectories up to time $t$. The coefficients $A(t, \bm x_{[0,t]})$, $B(t, \bm x_{[0,t]}, r)$, $C(t, \bm x_{[0,t]})$ and the interaction function $g(t, \bm x_{[0,t]}, \bm y_{[0,t]})$ are defined for all $t \in \R_+$, $\bm x, \bm y \in C(\R_+)$, and $r \in \R$. Precise assumptions are discussed below. Finally, $W^i$, $i \in \N$, is family of independent standard Brownian motions. We emphasize that there is no interaction in the volatility coefficient $A$. This is crucial for the methods used in this paper.

Under suitable assumptions, classical propagation of chaos \cite{MR221595,MR1108185,MR968996} states that for any fixed number $k \in \N$, the first $k$ particles $(X^{1,N}, \ldots, X^{k,N})$ converge jointly as $N \to \infty$ to $k$ independent copies $(X^1,\ldots,X^k)$ of the solution to the McKean--Vlasov equation
\begin{equation} \label{eq_McKV_SDE}
\begin{aligned}
dX_t &= A(t, X_{[0,t]}) \left( B\left( t, X_{[0,t]}, \int g(t, X_{[0,t]}, \bm y_{[0,t]}) \mu_t(d\bm y) \right) dt + dW_t \right) + C(t, X_{[0,t]}) dt \\
\mu_t &= \text{Law}(X_{[0,t]})
\end{aligned}
\end{equation}
with initial condition $\mu_0 = \nu_0$. A rigorous version of this statement that fits our current setup is given in \cite[Theorem~2.1]{JA19}, where convergence takes place in total variation and comes with quantitative bounds on the distance between the $k$-tuple from the $N$-particle system and the limiting $k$-tuple; see also \cite{MR3841406}.

At an intuitive level, propagation of chaos means that for large $N$ the interacting particle system behaves approximately like a system of i.i.d.\ particles. This intuition suggests that the large-$N$ asymptotics of the normalized maxima in \eqref{eq_normed_max} should match the asymptotics of the normalized maxima of the independent copies $X^i$ of the solution of \eqref{eq_McKV_SDE},
\begin{equation} \label{eq_normed_max_McKV}
\max_{i \le N} \frac{X^i_T - b^N_T}{a^N_T}.
\end{equation}
Because they are i.i.d., the latter fall within the framework of classical extreme value theory; see e.g.\ \cite{HF10, MR2364939} for an introduction. This intuition is flawed however, because propagation of chaos only makes statements about a fixed number $k$ of particles, while the maximum $\max_{i\le N} X^{i,N}_T$ depends on \emph{all} the particles. Furthermore, there are lower bounds on how similar $(X^{1,N},\ldots,X^{k,N})$ and $(X^1,\ldots,X^k)$ can be in general. In a simple Gaussian example, it is shown in \cite{LA22} that the relative entropy between the two is bounded below by a constant times $(k/N)^2$. In particular, if $k \to \infty$ and $k/N$ remains bounded away from zero, convergence does not take place. Barriers of this kind have prevented us from deriving statements about normalized maxima as corollaries of standard results on propagation of chaos. 

Our main result nonetheless shows, under assumptions, that the normalized maxima of the $N$-particle systems do behave asymptotically like those of an i.i.d.\ system. In this sense, one has \emph{propagation of chaos of normalized maxima}. The following statement is slightly informal; Theorem~\ref{T_main_precise} gives the precise version.

\begin{theorem}\label{T_main}
Suppose Assumptions~\ref{ass1} and \ref{ass2} below are satisfied. Fix $T \in (0,\infty)$ and suppose that for some normalizing constants $a^N_T,b^N_T$ the normalized maxima \eqref{eq_normed_max_McKV} of the i.i.d.\ system converge weakly to a nondegenerate distribution $\Gamma_T$ on $\R$ as $N \to \infty$. Then the normalized maxima \eqref{eq_normed_max} of the interacting particle systems also converge to $\Gamma_T$ as $N \to \infty$.
\end{theorem}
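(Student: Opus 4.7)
The plan is to use Girsanov's theorem to transport the law of the interacting system onto that of an i.i.d.\ system and then to control the resulting Radon--Nikodym density through a chaos expansion adapted to the symmetric structure of the maximum. On the probability space carrying $(X^{1,N},\ldots,X^{N,N})$ under $\P$ with driving Brownian motions $W^i$, set
\[
Z^N = \exp\!\left(-\sum_{i=1}^N \int_0^T \beta^{i,N}_s\, dW^i_s - \tfrac12 \sum_{i=1}^N \int_0^T |\beta^{i,N}_s|^2\, ds\right),
\]
where $\beta^{i,N}_s = B\bigl(s, X^{i,N}_{[0,s]}, \int g\, d\mu^N_s\bigr) - B\bigl(s, X^{i,N}_{[0,s]}, \int g\, d\mu_s\bigr)$ measures the discrepancy between the empirical and the McKean--Vlasov law inside the drift. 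Under $d\tilde{\P}^N = Z^N d\P$, Girsanov's theorem gives that $\tilde W^i_t = W^i_t + \int_0^t \beta^{i,N}_s\, ds$ are independent Brownian motions, and substituting into \eqref{eq_particle_SDE} collapses $\mu^N_s$ into the deterministic $\mu_s$ inside $B$. The particles then decouple: under $\tilde{\P}^N$, $(X^{1,N},\ldots,X^{N,N})$ are i.i.d.\ copies of the McKean--Vlasov solution \eqref{eq_McKV_SDE}. Writing $M^N = \max_{i \le N}(X^{i,N}_T - b^N_T)/a^N_T$, for any bounded continuous $F$,
\[
\E^\P[F(M^N)] - \E[F(\Gamma_T)] = \bigl(\E^{\tilde{\P}^N}[F(M^N)] - \E[F(\Gamma_T)]\bigr) + \E^{\tilde{\P}^N}\!\bigl[((Z^N)^{-1}-1)\, F(M^N)\bigr],
\]
and the first parenthesis vanishes by the hypothesized convergence of the i.i.d.\ maxima. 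Hence the theorem reduces to showing $\E^{\tilde{\P}^N}[((Z^N)^{-1}-1)F(M^N)] \to 0$ as $N \to \infty$.

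The immediate obstacle is that this cannot be obtained from a direct second-moment estimate. Under $\tilde{\P}^N$ the particles are i.i.d., so $\beta^{i,N}_s = O(N^{-1/2})$ in $L^2$ by the CLT-scale fluctuations of $\mu^N - \mu$; summing over $i$, the martingale $\log(Z^N)^{-1}$ has quadratic variation of order one, and $\E^{\tilde{\P}^N}[((Z^N)^{-1}-1)^2]$ is generically $O(1)$, so a Cauchy--Schwarz bound using $\|F\|_\infty$ fails. My plan instead is to expand
\[
(Z^N)^{-1} = 1 + \sum_{k \ge 1} \int_{0 < s_1 < \cdots < s_k < T} \sum_{i_1,\ldots,i_k=1}^N \beta^{i_1,N}_{s_1} \cdots \beta^{i_k,N}_{s_k}\, d\tilde W^{i_1}_{s_1} \cdots d\tilde W^{i_k}_{s_k}
\]
as an iterated It\^o integral series under $\tilde{\P}^N$, and further to expand each $\beta^{i_\ell,N}_{s_\ell}$ by Taylor expanding $B$ in its third argument, writing the leading piece schematically as $\partial_r B \cdot \tfrac{1}{N}\sum_{j_\ell=1}^N \bigl(g(s_\ell, X^{i_\ell}, X^{j_\ell}) - \overline g(s_\ell, X^{i_\ell})\bigr)$ with $\overline g$ the $\mu$-average of $g$ in its last argument. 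Each resulting term is labeled by a pair of index tuples $\ib=(i_1,\ldots,i_k)$ and $\jb=(j_1,\ldots,j_k)$, and its contribution to the expectation against $F(M^N)$ is dictated by the incidence pattern among those $2k$ indices.

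The hard part is the combinatorial bookkeeping. At chaos order $k$ the iterated integral decomposes into contributions indexed by $(\ib,\jb)$ with overall prefactor $N^{-k}$, with dependence structure dictated by the repetition pattern of the $2k$ indices. By exchangeability of the particles under $\tilde{\P}^N$, the expectation against $F(M^N)$ collapses across permutations, and the task is to show that only a combinatorially small set of ``collision'' patterns survives with non-vanishing prefactor, while the remaining configurations are killed by a combination of i.i.d.\ averaging and the centering by $\overline g$. Assumption~\ref{ass2} should provide the regularity of $B$ and $g$ needed for the Taylor expansion of the drift discrepancy together with the exponential integrability of $\langle \log(Z^N)^{-1}\rangle_T$ needed to control the tail of the chaos series, yielding an estimate of order $C^k/k!$ at chaos order $k$ times an $N$-dependent prefactor that tends to zero. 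Summing in $k$ and passing $N \to \infty$ then delivers $\E^{\tilde{\P}^N}[((Z^N)^{-1}-1)F(M^N)] \to 0$, which together with the reduction above completes the proof.
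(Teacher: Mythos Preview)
Your overall strategy---Girsanov to decouple, then chaos-expand the density and do combinatorics on the index patterns---is exactly the paper's, and your identification of why a naive Cauchy--Schwarz fails is correct. But there are two genuine gaps.

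First, the claimed tail bound of order $C^k/k!$ for the $k$th chaos term is too optimistic. The sharp iterated-integral estimate (Carlen--Kree type) gives, for $Z^N$ on $[s,t]$, an $L^{2p}$ bound of order $(Cp\sqrt{t-s})^k$, which is summable in $k$ only when $t-s$ is small; there is no $1/k!$ gain once you account for the $N$-dependence of $\langle M^N\rangle$. The paper handles this by partitioning $[0,T]$ into $n$ subintervals with $C\sqrt{T/n}<1/2$, writing $Z^N_T-1$ as a telescoping product of the ratios $Z^N_{T_\beta}/Z^N_{T_{\beta-1}}$, and truncating each ratio's chaos expansion separately. Without this partitioning, your chaos series need not converge in $L^2$ uniformly in $N$ for general $T$, and your truncation step breaks down.

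Second, and more seriously, your reduction works with a generic bounded continuous $F(M^N)$, and the combinatorial argument you sketch never uses the domain-of-attraction hypothesis beyond the ``first parenthesis'' term. But that hypothesis is essential in the hard part. The paper tests against the specific function $\prod_{i=1}^N \bm1_{\{X^i_T\le x_N\}}$, writes each factor as $1-\bm1_{\{X^i_T>x_N\}}$, and expands the product. The resulting expectations carry a factor $\prod_{\ell=1}^\kappa \bm1_{\{X^{i_{0\ell}}_T>x_N\}}$, and the hypothesis gives $\P(X_T>x_N)=O(1/N)$, so this factor contributes $(C/N)^\kappa$ after a H\"older step (with exponent $\log N$, to avoid losing too much on the stochastic-integral side). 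The combinatorics then has two distinct ingredients: condition~(i) of the paper's Lemma~\ref{L_iterated_integrals} uses the \emph{martingale} structure to kill terms where some $i_\ell$ index is ``fresh'' relative to the indicator set $K$ and the later $j$-indices, while condition~(ii) uses the \emph{centering} of $g$ to kill terms where the innermost $j$-index is fresh. The counting lemma then shows that at most $\binom{N}{\kappa}\kappa(\kappa+1)\cdots(\kappa+S)N^{S-1}$ terms survive, and it is the interplay of this count with the $(C/N)^\kappa$ from the rare-event indicators that makes the sum over $\kappa$ converge (via the elementary Lemma~\ref{combinatoryinequality}). None of this is available for a general $F$: you lose the product-of-indicators structure, the $O(1/N)$ tail input, and hence the mechanism that makes the surviving combinatorial terms small. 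Your ``$N$-dependent prefactor that tends to zero'' has no identified source.
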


The precise assumptions are discussed in Section~\ref{S_results}, along with additional comments, and examples are developed in Section~\ref{S_examples}. Here we only highlight three points, deferring the details to Sections~\ref{S_results} and~\ref{S_examples}.

First, a key motivating example and application of Theorem~\ref{T_main} comes from a class of models known as rank-based diffusions, which were first studied by \cite{MR1894767} in the context of stochastic portfolio theory. In a rank-based model with drift interaction, the $N$-particle system evolves as
\[
dX^{i,N}_t = B\left( \frac{1}{N} \text{rank}_t(X^{i,N}_t) \right) dt + dW^i_t, \quad i=1,\ldots,N,
\]
where $\text{rank}_t(X^{i,N}_t)$ denotes the \emph{rank} of the $i$th particle within the population: $\text{rank}_t(X^{i,N}_t) = k$ if $X^{i,N}_t$ is the $k$th largest particle, with a suitable convention in case of ties. The factor $1/N$ anticipates a passage to the large-$N$ limit. Rank-based diffusions of this type have been studied extensively and their mean-field asymptotics are well understood. However, the asymptotics of the largest particle, of particular interest in the applied context, were previously unknown. As shown in Example~\ref{ex_2}, our main result is applicable and allows us to fill this gap.

Second, note that Theorem~\ref{T_main} only asserts one-dimensional marginal convergence at single time points $T$. Nonetheless, as discussed in Section~\ref{S_results}, in some cases one expects joint marginal convergence of the form
\[
\left(\max_{i \le N} \frac{X^{i,N}_{T_1} - b^N_{T_1}}{a^N_{T_1}}, \ldots, \max_{i \le N} \frac{X^{i,N}_{T_n} - b^N_{T_n}}{a^N_{T_n}} \right) \Rightarrow \Gamma_{T_1} \otimes \cdots \otimes \Gamma_{T_n} \text{ as } N \to \infty,
\]
for any $T_1 < \ldots < T_n$, where the limit is in the sense of weak convergence toward a product measure with nondegenerate components. No continuous process has finite-dimensional marginal distributions of this form, so this precludes convergence at the level of continuous processes.

Third, as part of the hypotheses of Theorem~\ref{T_main} we assume that the normalized maxima \eqref{eq_normed_max_McKV} of the i.i.d.\ system admit a nondegenerate limit law $\Gamma_T$. Classical extreme value theory asserts that up to affine transformations, $\Gamma_T$ must belong to a one-parameter family of extreme value distributions consisting of the Fr\'echet, Gumbel, and Weibull distributions. It is obviously of interest to characterize $\Gamma_T$ in terms of the data $A,B,C,g$, and $\nu_0$. This question is the subject of ongoing work, and falls outside the scope of this paper. Nonetheless, in the examples in Section~\ref{S_examples}, we are able to verify this domain of attraction hypothesis by hand.

Let us mention that the large body of work that exists on the extreme eigenvalue statistics of random matrices is related to our paper in that those eigenvalues in many cases can be described by mean-field interacting diffusions. For example, the eigenvalues of a GUE (Gaussian Unitary Ensemble) random matrix are described by Dyson Brownian motion. However, the largest eigenvalue, suitably normalized, converges in distribution to the Tracy--Widom law \cite{MR1257246}, which is different from the extreme value distributions that can arise in our framework. Another random matrix model is the Ginibre ensemble \cite{MR173726}, whose normalized spectral radius converges to the Gumbel law \cite{MR1986426}. Although this is the same limit law that we observe in our examples, the interaction among the eigenvalues of the Ginibre ensemble is not covered by our setup, in essence because we exclude interaction in the diffusion coefficients.

The rest of the paper is organized as follows. First, we finish the introduction with an outline of some of the main steps and ideas of the proof of the main theorem. Then, in Section~\ref{S_results}, we give precise statements of our assumptions and results. We also reproduce an argument due to D.~Lacker (personal communication) which vastly simplifies the proof under suitable Lipschitz assumptions; see Remark~\ref{R_Lacker}. Examples and applications are discussed in Section~\ref{S_examples}. Section~\ref{sect4} collects key lemmas needed for the proof of the main theorem. These lemmas are proved in Section~\ref{S_proofs_of_key_lemmas}. Finally, the main theorem is proved in Section~\ref{S_main_proof}. We will frequently use the notation
\[
[n] = \{1,\ldots,n\} \text{ for any } n \in \N = \{1,2,\ldots\},
\]
and $\R_+ = [0,\infty)$. We will allow generic constants $C$ to vary from line to line, and occasionally indicate the dependence on parameters by writing $C(n)$, $C(p,n)$, etc.

\subsection{Outline of the proof of Theorem~\ref{T_main}.} \label{S_proof_outline}

The remainder of this introduction contains an outline of some of the main steps and ideas of the proof of Theorem~\ref{T_main}. To simplify the discussion we take $A = 1$, $C = 0$, and $B(t, \bm x_{[0,t]}, r) = r$. We fix $T \in (0,\infty)$ and note that the theorem will be proved if we show that for any $x \in \R$,
\begin{equation} \label{eq_proof_sketch_1}
\P\left( \max_{i \le N} \frac{X^{i,N}_T - b^N_T}{a^N_T} \le x \right) - \P\left( \max_{i \le N} \frac{X^i_T - b^N_T}{a^N_T} \le x \right) \to 0 \text{ as } N \to \infty.
\end{equation}
Here $X^i$, $i \in \N$, are i.i.d.\ copies of the solution of \eqref{eq_McKV_SDE} with driving Brownian motions $W^i$, and all objects are defined on a filtered probability space $(\Omega,\Fcal,(\Fcal_t)_{t\ge0},\P)$.

The first observation, going back at least to \cite{MR1080535} and also used by \cite{MR885876,MR3841406,JA19}, is that the structure of the particle dynamics \eqref{eq_particle_SDE} allows us to construct for each $N$ a (locally) equivalent measure $\Q^N \sim_\text{loc} \P$ under which $(X^1,\ldots,X^N)$ acquires the law of $(X^{1,N},\ldots,X^{N,N})$. This is accomplished by the Radon--Nikodym density process
\begin{equation} \label{eq_ZN}
Z^N_t = \frac{d\Q^N|_{\Fcal_t}}{d\P|_{\Fcal_t}} = \exp\left( M^N_t - \frac12 \langle M^N \rangle_t \right),
\end{equation}
where the local martingale $M^N$ is given by
\[
M^N_t = \sum_{i=1}^N \int_0^t \int g(s, X^i_{[0,s]}, \bm y_{[0,s]}) \left( \mu^N_s(d\bm y) - \mu_s(d\bm y)\right) dW^i_s
\]
and where, by overloading notation, we set $\mu^N_t = \frac1N \sum_{i=1}^N \delta_{X^i_{[0,t]}}$. We may then re-express the left-hand side of \eqref{eq_proof_sketch_1} as
\begin{equation} \label{eq_proof_sketch_2}
\E \left[ \prod_{i=1}^N \bm1_{\{ X^i_T \le x_N \}} \left(Z^N_T - 1\right) \right], \quad \text{where} \quad x_N = a^N_T x + b^N_T.
\end{equation}
The key point is that \eqref{eq_proof_sketch_2} is expressed in terms of the mutually independent processes $(X^i,W^i)$, $i \in [N]$, while the dependence that exists among the particle in the original $N$-particle system is captured by the Radon--Nikodym derivative $Z^N_T$. The proof of the theorem rests on a detailed analysis of how $Z^N_T$ interacts with the indicators in \eqref{eq_proof_sketch_2}, ultimately allowing us to ``extract enough independence'' to show that \eqref{eq_proof_sketch_2} tends to zero in the large-$N$ limit. An analogous strategy of ``extracting independence'' through the above change of measure was used in \cite{JA19}, although the actual execution of this strategy is very different in our context.

Iterating the SDE satisfied by the stochastic exponential $Z^N$ leads to the formal chaos expansion
\begin{equation} \label{eq_chaos}
Z^N_T = 1 + \sum_{m=1}^\infty  \int_0^T \int_0^{t_1} \cdots \int_0^{t_{m-1}} dM^N_{t_m} \cdots dM^N_{t_1}.
\end{equation}
If $T$ is sufficiently small, one can show that a truncated version of this expansion can be substituted for $Z^N_T-1$ in \eqref{eq_proof_sketch_2} at the cost an arbitrarily small error $\varepsilon > 0$. Importantly, although the truncation level $m_0$ (say) depends on $\varepsilon$, it does not depend on $N$. We are thus left with showing that each of the remaining $m_0$ terms tends to zero, that is, for each $m \in [m_0]$,
\begin{equation} \label{eq_proof_sketch_3}
\E \left[ \prod_{i=1}^N \bm1_{\{ X^i_T \le x_N \}} \int_0^T \int_0^{t_1} \cdots \int_0^{t_{m-1}} dM^N_{t_m} \cdots dM^N_{t_1} \right] \to 0 \text{ as } N \to \infty.
\end{equation}
This is done by substituting $\bm1_{\{ X^i_T \le x_N \}} = 1 - \bm1_{\{ X^i_T > x_N \}}$ and expanding the product, as well as substituting the definition of $M^N$ into the iterated integral and expand using multilinearity. The result is a sum consisting of all terms of the form
\begin{equation} \label{eq_proof_sketch_4}
\frac{1}{N^m} \E \left[ \prod_{r=1}^k \bm1_{\{ X^{\ell_r}_T > x_N \}} \int_0^T G^{i_1 j_1}_{t_1} \int_0^{t_1} G^{i_2 j_2}_{t_2} \cdots \int_0^{t_{m-1}} G^{i_m j_m}_{t_m} dW^{i_m}_{t_m} \cdots dW^{i_1}_{t_1} \right]
\end{equation}
with $k\in[N]$, $\{\ell_1,\ldots,\ell_k\} \subset [N]$, $(i_1,\ldots,i_m) \in [N]^m$, and $(j_1,\ldots,j_m) \in [N]^m$, and where the processes
\[
G^{ij}_t = g(t, X^i_{[0,t]}, X^j_{[0,t]}) - \int g(t, X^i_{[0,t]}, \bm y_{[0,t]}) \mu_t(d\bm y)
\]
arise when the empirical measure $\mu^N_t$ is substituted into the definition of $M^N$. We are now in a position to sketch the main ways in which we exploit the independence among the processes $(X^i,W^i)$, $i \in \N$.

For each $k$, there are $N^{2m} \binom{N}{k}$ terms of the form \eqref{eq_proof_sketch_4}. Using iterated stochastic integral estimates, along with the independence of the $X^i$, $i \in \N$, and fact that $\P(X_T > x_N) = O(1/N)$ due to the domain of attraction assumption, we show that each of these terms is bounded by
\[
\P(X_t > x_N)^{k \left(1-\frac{1}{\log N}\right)} \lceil \log N \rceil^{m} = \left( \frac{O(1)}{N} \right)^k \lceil \log N \rceil^{m}.
\]
This is not enough to deduce \eqref{eq_proof_sketch_3} however, because it only produces the upper bound $O(N^m \lceil \log N \rceil^{m})$ which does not tend to zero with $N$. Nonetheless, a refined analysis shows that a large number of the terms \eqref{eq_proof_sketch_4} are in fact zero. Very roughly, this happens when there is a small overlap between the indices $\{\ell_1,\ldots,\ell_k\}$ and $\{i_1,\ldots,i_m,j_1,\ldots,j_m\}$, in which case the expectation in \eqref{eq_proof_sketch_4} vanishes despite the presence of the indicators. A counting argument then shows that for each $k$, at most $\binom{N}{k} k (k+1) \cdots (k+m) N^{m-1}$ terms remain. Using the earlier estimate to control these remaining terms finally yields the bound $O(N^{-1} \lceil \log N \rceil^{m})$ of the left-hand side of \eqref{eq_proof_sketch_3}. This does tend to zero as $N \to \infty$ and allows us to complete the proof.

Counting the nonzero terms \eqref{eq_proof_sketch_4} and bounding their size constitute the heart of the proof. The key arguments involved are given as lemmas in Section~\ref{sect4}. However, other parts of the proof also require substantial technical effort. In particular, work is required to (i) reduce from the case of general coefficients $A,B,C$ to the simpler ones discussed above; (ii) obtain sufficiently strong iterated integral bounds to truncate the chaos expansion independently of $N$ when $T$ is small; and (iii) remove the smallness requirement on $T$. This leads to added complexity and explains why the full proof of Theorem~\ref{T_main} is rather long and technical.

\section{Assumptions and main results} \label{S_results}

To give a precise description of our setup, we first introduce regularity and growth assumptions on the data $A,B,C,g$.

\begin{assumption}\label{ass1}
The coefficient functions $(t,\bm x) \mapsto A(t, \bm x_{[0,t]})$, $(t,\bm x,r) \mapsto B(t, \bm x_{[0,t]}, r)$, $(t,\bm x) \mapsto C(t, \bm x_{[0,t]})$ and the interaction function $(t,\bm x,\bm y) \mapsto g(t, \bm x_{[0,t]}, \bm y_{[0,t]})$
are real-valued measurable functions on $\R_+ \times C(\R_+)$, $\R_+ \times C(\R_+) \times \R$, $\R_+ \times C(\R_+)$, and $\R_+ \times C(\R_+) \times C(\R_+)$, respectively. They satisfy the following conditions:
\begin{itemize}
\item $A$ and $C$ are uniformly bounded,
\item for every $t \in \R_+$ and $\bm x \in C(\R_+)$, the function $r \mapsto B(t, \bm x_{[0,t]}, r)$ is twice continuously differentiable, and its first and second derivatives are bounded uniformly in $(t, \bm x)$.
\end{itemize}
\end{assumption}

\begin{remark}
Note that $r \mapsto B(t, \bm x_{[0,t]}, r)$ itself need not be bounded, only its first two derivatives. We thus cover examples with linear growth. Moreover, if the interaction function $g$ is uniformly bounded, the growth properties of $r \mapsto B(t, \bm x_{[0,t]}, r)$ become irrelevant.
\end{remark}

By imposing further conditions we could appeal to known results on well-posedness of McKean--Vlasov equations to assert that \eqref{eq_McKV_SDE} has a solution. Rather than doing this, we will assume existence directly (uniqueness is not actually required, so we do not assume it.)

\begin{assumption}\label{ass2}
Fix a probability measure $\nu_0$ on $\R$ and assume that the McKean--Vlasov equation \eqref{eq_McKV_SDE} admits a weak solution $(X,W)$ with $X_0 \sim \nu_0$. Construct (for instance as a countable product) a filtered probability space $(\Omega,\Fcal,(\Fcal_t)_{t\ge0},\P)$ with a countable sequence $(X^i,W^i)$, $i \in \N$ of independent copies of $(X,W)$. Then, assume that there is a continuous function $K(t)$ such that for all $p \in \N$, $t \in \R_+$, $N \in \N$, and $i,j \in [N]$, one has the moment bounds
\begin{equation}\label{part1ass2}
\E\left[ g(t, X^i_{[0,t]}, X^j_{[0,t]})^{2p} \right] \le p!\, K(t)^p
\end{equation}
and
\begin{equation}\label{part2ass2}
\E\left[ \left( \int g(t, X^i_{[0,t]}, \bm y_{[0,t]}) ( \mu^N_t - \mu_t)(d\bm y) \right)^{2p} \right] \le \frac{1}{N^p} p! \, K(t)^p.
\end{equation}
\end{assumption}

Sufficient conditions for the moment bounds \eqref{part1ass2}--\eqref{part2ass2} along with further discussion are given in Remark~\ref{R_ass2} below.

Let Assumptions~\ref{ass1} and~\ref{ass2} be in force. For each $N \in \N$ we now use the processes $(X^i,W^i)$ to construct the $N$-particle systems by changing the probability measure. First define the $N$-particle empirical measure
\begin{equation} \label{eq_muN}
\mu^N_t = \frac1N \sum_{i=1}^N \delta_{X^i_{[0,t]}}.
\end{equation}
Next, define the (candidate) density process $Z^N = \exp(M^N - \frac12\langle M^N \rangle)$ where
\begin{equation} \label{eq_MN}
M^N_t = \sum_{i=1}^N \int_0^t \Delta B^{i,N}_s dW^i_s
\end{equation}
and
\begin{equation} \label{eq_DeltaBN}
\begin{aligned}
\Delta B^{i,N}_t &= B\left(t,X_{[0,t]}^i,\int g(t, X_{[0,t]}^i, \bm{y}_{[0,t]}) \mu_t^N(d\bm{y})\right) \\
& \qquad - B\left(t,X_{[0,t]}^i,\int g(t, X_{[0,t]}^i, \bm{y}_{[0,t]}) \mu_t(d\bm{y})\right).
\end{aligned}
\end{equation}
Assumptions~\ref{ass1} and~\ref{ass2} imply that $\E[ \int_0^t (\Delta B^{i,N}_s)^2 ds] < \infty$ for all $i$ and $t$, which ensures that $M^N$ is a well-defined positive martingale. We claim that $\E[Z^N_T] = 1$ for all $T \in (0,\infty)$, so that $Z^N$ is a true martingale. To see this, note that Lemma~\ref{tailest} implies that for any $s < t \le T$ with $t-s$ small enough, the chaos expansion
\[
\frac{Z^N_t}{Z^N_s} = 1 + \sum_{m=1}^\infty \int_s^t \int_s^{t_1} \cdots \int_s^{t_{m-1}} dM^N_{t_m} \cdots dM^N_{t_1}
\]
converges in $L^2$. Moreover, \cite[Proposition~1]{CAKR91} together with Assumption~\ref{ass2} imply that each iterated integral has expectation zero. As a result, $\E[Z^N_t / Z^N_s] = 1$ for all such $s,t$, and this implies $\E[Z_T]=1$ as claimed.


Since $Z^N$ is a true martingale, it induces a locally equivalent probability measure $\Q^N \sim_\text{loc} \P$ under which the processes defined by
\[
W^{i,N}_t = W^i_t  - \int_0^t \Delta B^{i,N}_s ds
\]
are mutually independent standard Brownian motions. Thus under $\Q^N$ we find that $X^1,\ldots,X^N$ follow the $N$-particle dynamics \eqref{eq_particle_SDE},
\begin{equation} \label{eq_Q_particle_SDE}
dX^i_t = A(t, X^i_{[0,t]}) \left( B\left( t, X^i_{[0,t]}, \int g(t, X^i_{[0,t]}, \bm y_{[0,t]}) \mu^N_t(d\bm y) \right) dt + dW^{i,N}_t \right) + C(t, X^i_{[0,t]}) dt.
\end{equation}


The following is the precise formulation of our main result.

\begin{theorem}\label{T_main_precise}
Suppose Assumptions~\ref{ass1} and \ref{ass2} are satisfied and consider the laws $\Q^N$ constructed above. Fix $T \in (0,\infty)$ and suppose that for some normalizing constants $a^N_T,b^N_T$ the normalized maxima of the i.i.d.\ system converge weakly to a nondegenerate distribution function $\Gamma_T$ on $\R$:
\[
\P\left( \max_{i \le N} \frac{X^i_T - b^N_T}{a^N_T} \le x \right) \to \Gamma_T(x) \text{ as } N \to \infty, \quad x \in \R.
\]
Then the normalized maxima of the interacting particle systems also converge to $\Gamma_T$:
\begin{equation} \label{eq_T_main_precise_1}
\Q^N\left( \max_{i \le N} \frac{X^i_T - b^N_T}{a^N_T} \le x \right) \to \Gamma_T(x) \text{ as } N \to \infty, \quad x \in \R.
\end{equation}
\end{theorem}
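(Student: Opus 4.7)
The plan is to follow the change-of-measure strategy laid out in Section~\ref{S_proof_outline}. Since $\Q^N$ is by construction the law under which $(X^1,\ldots,X^N)$ behaves as the interacting system, \eqref{eq_T_main_precise_1} reduces to showing that for every $x\in\R$,
\[
\E\left[\prod_{i=1}^N \mathbf{1}_{\{X^i_T \le x_N\}} (Z^N_T - 1)\right] \to 0, \qquad x_N = a^N_T x + b^N_T,
\]
since under $\P$ the $X^i$ are i.i.d.\ and the stated hypothesis pins down the limit. I would first perform a preliminary reduction to the simplified setting $A\equiv 1$, $C\equiv 0$, $B(t,\bm x_{[0,t]},r)=r$: the $C$ contribution can be absorbed by an auxiliary Girsanov rotation using boundedness of $C$; the $A$ factor can be removed by a pathwise rescaling using the uniform bound on $A$; and the nonlinearity in $B$ is handled via a first-order Taylor expansion, with the second-order remainder absorbed through the bound on $\partial_r^2 B$ and the $L^2$ estimate \eqref{part2ass2}.

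Next I would expand $Z^N_T$ through its chaos expansion. The key analytic input is Lemma~\ref{tailest}, which yields superexponential decay (in $m$) of the $L^2$-norms of the iterated integrals on any subinterval of length at most some $\delta$ determined by $K$. This allows truncation at a level $m_0 = m_0(\varepsilon)$ independent of $N$ with $L^2$ tail at most $\varepsilon$. Plugging in
\[
dM^N_s = \sum_{i=1}^N \Delta B^{i,N}_s\, dW^i_s, \qquad \Delta B^{i,N}_s = \frac{1}{N}\sum_{j=1}^N G^{ij}_s,
\]
with $G^{ij}_s = g(s,X^i_{[0,s]},X^j_{[0,s]}) - \int g(s,X^i_{[0,s]},\bm y_{[0,s]})\mu_s(d\bm y)$, and expanding $\mathbf{1}_{\{X^i_T \le x_N\}} = 1 - \mathbf{1}_{\{X^i_T > x_N\}}$ over the product, reduces the problem to summing terms of the shape \eqref{eq_proof_sketch_4}, indexed by a subset $\{\ell_1,\ldots,\ell_k\} \subset [N]$ and two index tuples in $[N]^m$.

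The combinatorial heart of the proof is to show that the overwhelming majority of these terms vanish by independence. Conditioning successively on the trajectories $X^i$ and using the fact that $G^{ij}_s$ is $\P$-centered in its second argument, any term containing an index that is \emph{isolated} -- not appearing among $\{\ell_r\}$ and not repeated anywhere in the stochastic-integral index set -- contributes zero. A careful enumeration of the surviving configurations gives at most $\binom{N}{k} k(k+1)\cdots(k+m) N^{m-1}$ terms. Combined with an iterated-integral moment estimate and the domain-of-attraction consequence $\P(X_T > x_N) = O(1/N)$, each such term is bounded in absolute value by $(C/N)^k \lceil\log N\rceil^m$. Summing over $k$ and $m \in [m_0]$ produces an overall bound of order $N^{-1}\lceil\log N\rceil^{m_0}$, which tends to zero, yielding \eqref{eq_T_main_precise_1} up to the prescribed $\varepsilon$.

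The main obstacles are threefold: (i) making the independence-based cancellation rigorous, which requires delicate conditional analysis of the iterated integrals and is precisely where the lemmas of Section~\ref{sect4} do the heavy lifting, in particular the tight index-overlap counting; (ii) the chaos expansion only converges on short intervals, so to extend from small $T$ to general $T$ I would iterate the argument across a finite partition of $[0,T]$, propagating the required estimates via the conditional law of the system at each node and controlling the accumulation of errors; and (iii) ensuring that the reduction to simplified coefficients is honest, i.e.\ that the Taylor remainder in $B$ does not spoil the delicate cancellations -- this is exactly where the sharp $1/N^p$ decay in \eqref{part2ass2} becomes indispensable.
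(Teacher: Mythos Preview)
Your plan is essentially the paper's approach, and the combinatorial core---cancellation via Lemma~\ref{L_iterated_integrals}, counting via Lemma~\ref{count}, the $\lceil\log N\rceil$-dependent H\"older splitting yielding the bound \eqref{eq_bound_on_nonzero_terms}---is correctly identified. Two points need correction.

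First, your proposed reduction to $A\equiv 1$, $C\equiv 0$ via ``Girsanov rotation'' and ``pathwise rescaling'' is unnecessary and slightly confused. The Radon--Nikodym density $Z^N$ in \eqref{eq_ZN}--\eqref{eq_DeltaBN} depends only on $\Delta B^{i,N}$; the coefficients $A$ and $C$ do not enter $Z^N$ at all, because the measure change shifts the drift of the $W^i$, not of the $X^i$ directly. Once you write the problem as $\E[\prod_i \mathbf 1_{\{X^i_T\le x_N\}}(Z^N_T-1)]$, there is nothing to remove. The only genuine reduction is the linearization of $B$ in its third argument (Step~3 of the paper), replacing $M^N$ by the linearized martingale $\widetilde M^N$ of \eqref{tildem} and absorbing the quadratic Taylor remainder into a $C/\sqrt N$ error via \eqref{part2ass2} and Lemma~\ref{lpestimationiterated}.

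Second, your description of the extension to general $T$ as ``propagating estimates via the conditional law at each node'' is too vague and not how the argument actually closes. The paper uses the algebraic telescoping identity
\[
Z^N_T - 1 \;=\; \sum_{\alpha=1}^n \prod_{\beta=1}^\alpha \left(\frac{Z^N_{T_\beta}}{Z^N_{T_{\beta-1}}} - \delta_{\alpha\beta}\right),
\]
truncates the chaos expansion of each ratio separately (Step~2), and then---crucially---applies Lemmas~\ref{L_iterated_integrals} and~\ref{count} in their full multi-interval form, where the vanishing criteria \ref{L_iterated_integrals_1}--\ref{L_iterated_integrals_2} track index membership across \emph{all} subintervals simultaneously. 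A naive interval-by-interval iteration would not work: the indicators $\mathbf 1_{\{X^i_T>x_N\}}$ sit at the terminal time and couple to every subinterval at once, so the cancellation and counting must be done for the full product $\prod_\beta I^N_{\bm i_\beta,\bm j_\beta}(T_{\beta-1},T_\beta)$ in one pass. Your ``isolated index'' heuristic is the right intuition but will need to be sharpened to the ordered criteria of Lemma~\ref{L_iterated_integrals} to recover the precise count $\binom{N}{\kappa}\kappa(\kappa+1)\cdots(\kappa+S)N^{S-1}$ with $S=k_1+\cdots+k_\alpha$.
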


Classical extreme value theory asserts that up to affine transformations, $\Gamma_T$ must belong to a one-parameter family of extreme value distributions consisting of the Fr\'echet, Gumbel, and Weibull distributions. Our assumptions tend to preclude the heavy-tailed behavior that is characteristic of the Fr\'echet class.

\begin{proposition}\label{P_not_Frechet}
Let the assumptions of Theorem~\ref{T_main_precise} be satisfied. Assume in addition that all moments of $\nu_0$ are finite and one has the linear growth bound $|B(t,\bm x_{[0,t]}, 0)| \le c(1 + x^*_t)$ for all $t \in [0,T]$, $\bm x \in C(\R_+)$, where the constant $c$ may depend on $T$ and we use the notation $x^*_t = \sup_{s\le t}|x_s|$. Then $\Gamma_T$ must belong to the Gumbel or Weibull family.
\end{proposition}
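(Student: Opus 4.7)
The plan is to show that, under the additional hypotheses, $X_T$ has finite moments of all orders. This excludes the Fr\'echet domain of attraction (which requires a regularly varying upper tail with some negative index, and therefore forces sufficiently large moments to diverge), leaving only the Gumbel and Weibull families as possible limits of the normalized maxima of the i.i.d.\ system.

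First, I combine the uniform boundedness of $\partial_r B$ from Assumption~\ref{ass1} with the hypothesized linear growth bound $|B(t,\bm x_{[0,t]},0)| \le c(1 + x^*_t)$ to obtain $|B(t,\bm x_{[0,t]}, r)| \le c(1 + x^*_t) + L_B |r|$ for a constant $L_B$. Together with the uniform boundedness of $A$ and $C$, this controls the drift in \eqref{eq_McKV_SDE} pathwise in terms of $X^*_t$ and the mean-field term $\alpha(t) := \int g(t, X_{[0,t]}, \bm y_{[0,t]}) \mu_t(d\bm y)$. Next, since $\alpha(t) = \E[g(t, X_{[0,t]}, \tilde X_{[0,t]}) \mid X]$ where $\tilde X$ is an independent copy of $X$, Jensen's inequality and \eqref{part1ass2} give $\E[\alpha(t)^{2p}] \le p!\, K(t)^p$, which is bounded uniformly in $t \in [0,T]$ since $K$ is continuous on the compact interval $[0,T]$.

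With these ingredients in hand, I bound $\E[(X^*_T)^p]$ for arbitrary $p \ge 1$ by a standard moment-Gronwall argument applied to the integral form of \eqref{eq_McKV_SDE}: take a supremum over $[0,T]$, control the martingale part $\int_0^{\cdot} A\, dW$ via Burkholder--Davis--Gundy using boundedness of $A$, use Minkowski together with the previous step to control $\|\int_0^T |\alpha(s)|\, ds\|_{L^p}$, and use the hypothesis that $\nu_0$ has all moments finite to control $\|X_0\|_{L^p}$. The resulting estimate $\E[(X^*_T)^p] < \infty$ for every $p$ implies that $X_T \in L^p$ for every $p$, so the upper tail of $X_T$ decays faster than any power; by the classical Fisher--Tippett--Gnedenko theorem, the nondegenerate limit $\Gamma_T$ must therefore lie in the Gumbel or Weibull family.

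The argument is mostly routine. The only mild subtlety is that the mean-field term $\alpha(t)$ is not bounded pathwise but only in $L^p$ for every $p$, so Gronwall must be applied at the level of moments rather than sample paths.
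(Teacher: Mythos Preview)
Your proposal is correct and follows essentially the same strategy as the paper: show $\E[(X^*_T)^p]<\infty$ for all $p$ by combining the linear growth bound on $B$, boundedness of $A,C$, BDG for the martingale term, the moment bound \eqref{part1ass2} for the mean-field term, and a Gronwall argument; then exclude Fr\'echet via regular variation of the tail.

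One small comment: your remark that Gronwall ``must be applied at the level of moments rather than sample paths'' is not quite right. The paper actually applies Gronwall \emph{pathwise}: it writes $X^*_t \le c\int_0^t X^*_s\,ds + J_t$ for the nondecreasing (random) process
\[
J_t = |X_0| + c\int_0^t\Bigl(1 + \int |g(s,X_{[0,s]},\bm y_{[0,s]})|\,\mu_s(d\bm y)\Bigr)ds + \sup_{s\le t}\Bigl|\int_0^s A(u,X_{[0,u]})\,dW_u\Bigr|,
\]
obtains $X^*_T \le e^{cT}J_T$ pathwise, and only then takes moments of $J_T$. The mean-field term is random but perfectly well-defined path by path, so there is no obstruction to pathwise Gronwall with a random forcing term. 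This is marginally cleaner than the moment-Gronwall route you describe, but both work.
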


\begin{proof}
We allow $c$ to change from one occurrence to the next. The assumptions imply the bound $|B(t, X_{[0,t]}, r)| \le c(1 + X^*_t + |r|)$, which together with the uniform boundedness of $A$ and $C$ yields
\[
|X_t| \le |X_0| + c\int_0^t \left( 1 + X^*_s + \int \left| g(s, X_{[0,s]}, \bm y_{[0,s]}) \right| \mu_s(d\bm y) \right) ds + \left| \int_0^t A(s, X_{[0,s]}) dW_s \right|.
\]
This in turn implies
\[
X^*_t \le c \int_0^t X^*_s ds + J_t
\]
for the nondecreasing process
\[
J_t = |X_0| + c\int_0^t \left( 1 + \int \left| g(s, X_{[0,s]}, \bm y_{[0,s]}) \right| \mu_s(d\bm y) \right) ds + \sup_{s \le t} \left| \int_0^s A(u, X_{[0,u]}) dW_u \right|.
\]
Pathwise application of Gronwall's inequality then yields $X^*_T \le e^{cT} J_T$. Because all moments of $\nu_0$ are finite, $A$ is uniformly bounded, and thanks to \eqref{part1ass2} of Assumption~\ref{ass2}, all moments of $J_T$ are finite. (For the stochastic integral term this uses the BDG inequalities.) Then so are the moments of $X^*_T$, and then also of $X_T$. However, if $X_T$ were in the Fr\'echet domain of attraction it would have a regularly varying tail (see \cite[Theorem~1.2.1]{HF10}), implying that all sufficiently high moments are infinite. This excludes the Fr\'echet family.
\end{proof}

\begin{remark}
Weak convergence is equivalent to convergence for all $x \in \R$ where $\Gamma_T$ is continuous. However, since all extreme value distributions are continuous, restricting to continuity points is redundant.
\end{remark}

Theorem~\ref{T_main_precise} asserts one-dimensional marginal convergence at single time points $T$. We do not prove full finite-dimensional marginal convergence in this paper, but let us nonetheless make the following observation. In certain examples, the random vectors $(X_{T_1},\ldots,X_{T_n})$ with $T_1<\ldots<T_n$ exhibit \emph{asymptotic independence}. This means that each $X_{T_\alpha}$, $\alpha \in [n]$, belongs to the maximum domain of attraction of some extreme value distribution $\Gamma_{T_\alpha}$ with normalizing constants $a^N_{T_\alpha},b^N_{T_\alpha}$, and that the vector of normalized maxima converges to a product measure:
\begin{equation} \label{eq_as_indep}
\P\left(\max_{i \le N} \frac{X^i_{T_1} - b^N_{T_1}}{a^N_{T_1}} \le x_1, \ldots, \max_{i \le N} \frac{X^i_{T_n} - b^N_{T_n}}{a^N_{T_n}} \le x_n \right) \to \Gamma_{T_1}(x_1) \cdots \Gamma_{T_n}(x_n)
\end{equation}
as $N \to \infty$ for all $(x_1,\ldots,x_n) \in \R^n$. Asymptotic independence is characterized by the condition
\[
\frac{\P (X_{T_\alpha} > a^N_{T_\alpha} x_\alpha + b^N_{T_\alpha} \text{ and } X_{T_\beta} > a^N_{T_\beta} x_\beta + b^N_{T_\beta} )}{\P(X_{T_\alpha} > a^N_{T_\alpha} x_\alpha)} \to 0
\]
for all $\alpha \ne \beta$ in $[n]$ and all $x_\alpha,x_\beta \in \R$ such that $\Gamma_{T_\alpha}(x_\alpha) > 0$ and $\Gamma_{T_\beta}(x_\beta) > 0$; see \cite[Proposition~5.27]{MR2364939}. In particular, this is known to hold for multivariate Gaussian distributions with correlation in $(-1,1)$; see \cite[Corollary~5.28]{MR2364939}. Thus if $X$ is a Gaussian process with non-trivial correlation function, then all finite-dimensional marginal distributions of the centered and scaled processes $\max_{i \le N} (X^i_t - b^N_t)/a^N_t$ converge as $N \to \infty$ to product distributions with nondegenerate components (specifically, affine transformations of Gumbel). No continuous process has finite-dimensional marginal distributions of this form, so this precludes convergence at the level of continuous processes. The Gaussian case is discussed further in Example~\ref{ex_1}. Whenever the i.i.d.\ particles $X^i$, $i \in \N$, satisfy the asymptotic independence property \eqref{eq_as_indep}, it is natural to expect that the same is true for the interacting $N$-particle systems, although proving this is outside the scope of this paper.

We end this section with a few additional remarks. 

\begin{remark}[on Assumption~\ref{ass2}] \label{R_ass2}
There is a large literature on well-posedness of McKean--Vlasov equations, providing a range of conditions under which a solution to \eqref{eq_McKV_SDE} exists; see e.g.\ \cite{MR221595,MR1108185,MR968996,MR3841406}. Next, the moment bound \eqref{part1ass2} is satisfied if the centered random variables $g(t, X^i_{[0,t]}, X^j_{[0,t]}) - \int g(t, X^i_{[0,t]}, \bm{y})\mu_t(d\bm{y})$ are bounded or conditionally (on $X^i_{[0,t]}$) sub-Gaussian with a uniformly bounded variance proxy (see e.g.\ \cite{HDS15} for a review of sub-Gaussianity). One can then also verify \eqref{part2ass2} by noticing that the $2p$-th moment of
\begin{align*}
\int g(t, X^i_{[0,t]}, \bm{y}) (\mu_t^{N} - \mu_t )(d\bm{y}) = \frac{1}{N}\sum_{j = 1}^{N}\left( g(t, X^i_{[0,t]}, X^j_{[0,t]})  - \int g(t, X^i_{[0,t]}, \bm{y})\mu_t(d\bm{y}) \right)
\end{align*}
can be controlled by that of
\begin{equation} \label{eq_R_Ass2}
\frac{1}{N - 1}\sum_{\substack{j = 1 \\ j \neq i}}^{N}\left( g(t, X^i_{[0,t]}, X^j_{[0,t]})  - \int g(t, X^i_{[0,t]}, \bm{y})\mu_t(d\bm{y}) \right)
\end{equation}
plus a term proportional to $p! 3^p K(r)^p / N^{2p}$. 
Conditionally on $X^i_{[0,t]}$, the $N - 1$ summands in \eqref{eq_R_Ass2} are pairwise independent and identically distributed with zero mean. In the sub-Gaussian case, these $N-1$ summands above are also sub-Gaussian, so their average is sub-Gaussian with an $O(\frac{1}{N})$ variance, and the desired bound follows from \cite[Lemma~1.4]{HDS15}. In the case of bounded summands, we instead apply Hoeffding’s inequality \cite[Theorem~1.9]{HDS15} and then again \cite[Lemma~1.4]{HDS15}.
\end{remark}

\begin{remark}[Non-i.i.d.\ initial conditions]
Standard propagation of chaos is frequently formulated under weaker assumptions on the initial conditions of the $N$-particle systems than being i.i.d. A common assumption is that $(X_0^{1,N}, \ldots, X_0^{k,N})$ converges weakly to $(X_0^{1}, \ldots, X_0^{k})$ as $N \to \infty$ for each $k \in \mathbb{N}$, where $X^i_0$, $i \in \N$, is an i.i.d.\ sequence. Although we have not succeeded in proving our main result under this weaker assumption on the initial conditions, it is nonetheless possible to move slightly beyond the i.i.d.\ setting through an additional change of measure. Specifically, let $\nu^N_0$ (a probability measure on $\R^N$) be the desired joint initial law of the $N$-particle system, and assume it is absolutely continuous with respect to the $N$-fold product measure $\nu_0^{\otimes N}$, where as above $\nu_0$ is the initial law of the limiting McKean--Vlasov SDE. We make the total variation type stability assumption that
\[
\lim_{N \to \infty} \int_{\R^N} \left| \frac{d\nu^N_0}{d\nu_0^{\otimes N}} - 1 \right|d\nu_0^{\otimes N} = 0.
\]
Letting $\Q^N$ be defined as before, we now obtain a new measure $\widetilde\Q^N$ by using
\[
\widetilde Z^N_0 = \frac{d\nu^N_0}{d\nu_0^{\otimes N}}(X^1_0,\ldots,X^N_0)
\]
as Radon--Nikodym derivative. This affects the initial law, but not the form of the particle dynamics. Then as $N \to \infty$ we have
\begin{align*}
&\left| \widetilde \Q^N\left( \max_{i \le N} \frac{X^i_T - b^N_T}{a^N_T} \le x \right) - \Q^N\left( \max_{i \le N} \frac{X^i_T - b^N_T}{a^N_T} \le x \right) \right| \\
&\qquad =
\left| \E \left[ \prod_{i=1}^N \bm1_{\{ X^{i}_{T} \le x_N \}} \left(\widetilde{Z}_0^N - 1\right) \right] \right| \le \int_{\R^N} \left| \frac{d\nu^N_0}{d\nu_0^{\otimes N}} - 1 \right| d\nu_0^{\otimes N} \to 0.
\end{align*}
This shows that the large-$N$ asymptotics of the normalized maxima of the $N$-particle system are unaffacted when the initial distribution is $\nu^N_0$ instead of $\nu_0^{\otimes N}$.
\end{remark}

\begin{remark}[A coupling argument] \label{R_Lacker}
D.\ Lacker has pointed out to us that a simple coupling argument yields our propagation of chaos result in the presence of constant volatility and Lipschitz drift. Although this does not lead to a proof of our main result (in particular, our key example of rank-based models is excluded due to discontinuous drifts; see Example~\ref{ex_2}), it is worth recording the argument here.
Assume that the drift function $B$ satisfies the Lipschitz condition
\[
|B(x,\mu) - B(y,\nu)| \le C(|x-y| + \Wcal_p(\mu,\nu))
\]
for some constant $C$, all $x,y \in \R$, and all probability measures $\mu,\nu$ with finite $p$-th moment. Here $\Wcal_p(\mu,\nu)$ is the $p$-Wasserstein distance between $\mu$ and $\nu$ for some fixed $p \in [1,\infty)$. We let the $N$-particle system be given as the unique strong solution of the system of SDEs
\[
dX^{i,N}_t = B(X^{i,N}_t, \mu^N_t) dt + dW^i_t, \quad X^{i,N}_0 = \xi^i, \quad i=1,\ldots,N,
\]
where $W^i$, $i \in \N$, is a sequence of independent standard Brownian motions and $\xi^i$, $i \in \N$, is a sequence of $p$-integrable i.i.d.\ initial conditions. For each $i$, let $X^i$ be the unique strong solution of the McKean--Vlasov SDE
\[
dX^i_t = B(X^i_t, \mu_t)dt + dW^i_t, \quad X^i_0 = \xi^i, \quad \mu_t = \text{Law}(X^i_t),
\]
using the same Brownian motion and initial condition as for the $N$-particle systems. We then obtain
\begin{align*}
\max_{i \leq N}|X_t^{i, N} - X_t^{i}| &=  \max_{i \leq N}\left|\int_0^t \left( B(X_s^{i, N}, \mu_s^N) - B(X_s^{i}, \mu_s) \right) ds\right| \nonumber \\
&\le C\int_0^t\max_{i \leq N}|X_s^{i, N} - X_s^{i}|ds + C\int_0^T \Wcal_p(\mu_{s}^N,\mu_{s})ds,
\end{align*}
and a Gronwall-type argument gives
\begin{equation*}
\max_{i \leq N}\left|X_T^{i, N} - X_T^{i}\right| \leq Ce^{CT}\int_0^TW_p(\mu_{s}^N,\mu_{s})ds.
\end{equation*}
Consequently,
\begin{align*}
\E\left[\left|\max_{i \le N} \frac{X^{i,N}_T - b^N_T}{a^N_T} - \max_{i \le N} \frac{X^{i}_T - b^N_T}{a^N_T}\right|\right]
&\le \frac{1}{a^N_T}\E\left[\max_{i \leq N}\left|X_T^{i, N} - X_T^{i}\right|\right] \\
&\le \frac{Ce^{CT}}{a^N_T}\int_0^T\E\left[W_p(\mu_{s}^N, \mu_{s})\right]ds.
\end{align*}
Provided that $\int_0^T\E\left[W_p(\mu_{s}^N, \mu_{s})\right]ds / a^N_T \to 0$ as $N \to \infty$, our propagation of chaos result follows. This happens, for instance, in the Gaussian case where $a^N_T$ behaves like $1/\sqrt{\log N}$ (see Example~\ref{ex_1} below) and $\E\left[W_p(\mu_{s}^N, \mu_{s})\right]$ behaves like $N^{-\gamma}$ for some $\gamma > 0$.
\end{remark}

\section{Examples} \label{S_examples}

We discuss two examples that illustrate the main result.

\begin{example}[Gaussian particles] \label{ex_1}
The following Gaussian particle system has been studied in a number of contexts, such as models for monetary reserves of banks \cite{CFS13,BOCA16}, and default intensities in large interbank networks \cite[Example~2.2]{JK21}. The $N$-particle system evolves according to the multivariate Ornstein--Uhlenbeck process
\begin{align*}
X_{t}^{i, N} &= X_{0}^{i} - \kappa \int_{0}^{t}\left(X_{s}^{i, N} -  \frac{1}{N}\sum_{j=1}^NX_{s}^{j, N} \right)ds + \sigma W_t^{i}, \quad i=1,\ldots,N,
\end{align*}
with i.i.d.\ $N(m_0,\sigma_0^2)$ initial conditions. Here $\kappa, m_0 \in \R$ and $\sigma, \sigma_0 \in (0,\infty)$ are parameters. In our setting this example arises by taking $A(t,\bm x_{[0,t]}) = \sigma$, $B(t,\bm x_{[0,t]},r) = -\kappa (x_t - r)/\sigma$, $C(t, \bm x_{[0,t]}) = 0$, and $g(t,\bm x_{[0,t]},\bm y_{[0,t]}) = y_t$. Clearly Assumption~\ref{ass1} is satisfied. The McKean--Vlasov equation \eqref{eq_McKV_SDE} reduces to
\[
dX_t = - \kappa \left(X_t - \E[X_t] \right)dt + \sigma dW_t.
\]
Taking expectations one obtains $\E[X_t] = \E[X_0] = m_0$ for all $t \in \R_+$, showing that $X$ is an Ornstein--Uhlenbeck process with constant mean $m_0$ and time-$t$ variance given by
\[
\sigma_t^2 = \text{Var}(X_t) = e^{-2\kappa t}\sigma_0^2 + (1-e^{-2\kappa t}) \frac{\sigma^2}{2\kappa }.
\]
Letting $X^i$, $i \in \N$, be independent copies of $X$, we see that $g(t, X^i_{[0,t]}, X^j_{[0,t]}) = X^j_t$ is Gaussian for all $i,j$. Thus in view of Remark~\ref{R_ass2}, Assumption~\ref{ass2} is satisfied. 

Now, it is a well-known fact \cite[Example~1.1.7]{HF10} that the standard Gaussian distribution belongs to the maximum domain of attraction of the standard Gumbel distribution $\Gamma(x) = \exp(-e^{-x})$ with normalizing constants
\[
b^N = \frac{1}{a^N} = \sqrt{ 2\log N - \log \log N - \log(4\pi) }.
\]
By normalizing, we see that $X_T$ also belongs to the maximum domain of attraction of $\Gamma$ for each $T$, with normalizing constants
\[
a^N_T = \sigma_T a^N \text{ and } b^N_T = m_0 + \sigma_T b^N.
\]
Indeed, since $(X^i_T - m_0)/\sigma_T$ is standard Gaussian we have
\[
\P\left( \max_{i \le N} \frac{X^i_T - b^N_T}{a^N_T} \le x \right) = \P\left( \max_{i \le N} \frac{(X^i_T - m_0)/\sigma_T - b^N}{a^N} \le x \right) \to \exp(-e^{-x}) \text{ as } N \to \infty.
\]
This shows that the hypotheses of Theorem~\ref{T_main_precise} are satisfied, and we deduce that same asymptotics hold for the $N$-particle systems,
\[
\P\left( \max_{i \le N} \frac{X^{i,N}_T - b^N_T}{a^N_T} \le x \right) \to \exp(-e^{-x}) \text{ as } N \to \infty.
\]
Lastly, $X$ is a Gaussian process with correlation function
\[
\text{Corr}(X_s,X_t) = \sqrt{ \frac{\alpha + e^{2\kappa s} - 1}{\alpha + e^{2\kappa t} - 1} }, \quad s < t,
\]
where $\alpha = 2 \kappa \sigma_0^2 / \sigma^2$. Thus $\text{Corr}(X_s,X_t) \in (0,1)$ for all $s \ne t$. The discussion in Section~\ref{S_results} implies that the finite-dimensional marginal distributions of $X$ exhibit asymptotic independence, and that \eqref{eq_as_indep} holds for all $n \in \N$ and $T_1 < \cdots < T_n$. In particular, there is no functional convergence in the space of continuous processes.
\end{example}

\begin{example}[Rank-based diffusions] \label{ex_2}
Consider the $N$-particle system evolving according to
\[
dX^{i,N}_t = B\left( F^N_t( X^{i,N}_t) \right) dt + \sqrt{2} dW^i_t
\]
for $i \in [N]$, where $B(r)$ is a twice continuously differentiable function on $[0,1]$ and
\[
F^N_t(x) = \frac{1}{N} \sum_{j=1}^N \bm1_{\{X^{j,N}_t \le x\}}
\]
is the empirical distribution function. Such systems are called \emph{rank-based} because the drift (and in more general formulations also the diffusion) of each particle depends on its \emph{rank} within the population. Indeed, module tie-breaking, $F^N_t( X^{i,N}_t) = k/N$ where $k=1$ if $X^{i,N}_t$ is the smallest particle, $k=2$ if $X^{i,N}_t$ is the second smallest, and so on. Rank-based systems have been studied extensively and play an important role in stochastic portfolio theory; see e.g.\ \cite{MR1894767,MR2914770,MR3327514,MR3340241,MR3773380}. They are challenging to analyze in part because the drift is discontinuous as a function of the current state and the empirical measure (with the Wasserstein metric $\Wcal_p$ for any $p \geq 1$), making e.g.\ the argument in Remark~\ref{R_Lacker} inapplicable.

The above system fits into our setup by taking $A(t,\bm x_{[0,t]}) = \sqrt{2}$, $B(t,\bm x_{[0,t]},r) = B(r)$, $C(t, \bm x_{[0,t]}) = 0$, and $g(t,\bm x_{[0,t]},\bm y_{[0,t]}) = \bm1_{\{y_t \le x_t\}}$. Clearly Assumption~\ref{ass1} is satisfied. The limiting McKean--Vlasov equation takes the form
\begin{align*}
dX_t & = B( F_t(X_t) ) dt + \sqrt{2} dW_t, \\
F_t(x) & = \P(X_t \le x).
\end{align*}
The above setup is well-studied, and both the $N$-particle system and the McKean--Vlasov equation are well-posed \cite{MR2914770,MR3327514}. Since the interaction function $g$ and drift coefficient $B$ are both bounded, Assumption~\ref{ass2} is readily seen to be satisfied.

General criteria for verifying the domain of attraction assumption on $X$ are not available. However, if $X$ is stationary, more can be said. It is known \cite{MR2914770,MR3327514} that the distribution function $F_t(x)$ satisfies the PDE
\[
\partial_t F = \partial_{xx} F - \partial_x \mathfrak B(F),
\]
where $\mathfrak B(u) = \int_0^u B(r) dr$. Let us assume that $B(0), B(1) \ne 0$, $\mathfrak B(u) > 0$ for all $u \in (0,1)$, and $\mathfrak B(1) = 0$. In this case there is a solution $F(x)$ to the stationary equation
\begin{equation} \label{eq_F_ODE}
F'' = \mathfrak B(F)'
\end{equation}
which is a distribution function. By using $F$ as initial condition for $X_0$, the solution of the McKean--Vlasov equation has constant marginal law, $\P(X_t \le x) = F(x)$ for all $t$ and $x$. By integrating \eqref{eq_F_ODE} once and using that $F(-\infty) = F'(-\infty) = \mathfrak B(0) = 0$ one obtains
\begin{equation} \label{eq_F_ODE_2}
F' = \mathfrak B(F).
\end{equation}
(Here it becomes clear why $\mathfrak B \ge 0$ and $\mathfrak B(1) = 0$ are needed, as $F'$ is a probability density.) We now apply the von~Mises condition \cite[Theorem~1.1.8]{HF10}, which states that $F$ belongs to the Gumbel domain of attraction if
\[
\lim_{x \to \infty} \frac{(1 - F(x)) F''(x)}{F'(x)^2} = -1.
\]
The mean value theorem yields $\mathfrak B(F(x)) = \mathfrak B(F(x)) - \mathfrak B(1) = B(r^*)(F(x)-1)$ for some $r^* \in (F(x),1)$. Next, \eqref{eq_F_ODE_2} implies that $F'' = B(F)\mathfrak B(F)$. Thus,
\[
\frac{(1 - F(x)) F''(x)}{F'(x)^2} = \frac{(1-F(x))B(F(x))}{\mathfrak B(F(x))} = - \frac{B(F(x))}{B(r^*)} \to -\frac{B(1)}{B(1)} = -1 \text{ as } x \to \infty.
\]
This confirms that the hypotheses of Theorem~\ref{T_main_precise} are satisfied. We deduce that Gumbel asymptotics hold for the $N$-particle systems,
\[
\P\left( \max_{i \le N} \frac{X^{i,N}_T - b^N_T}{a^N_T} \le x \right) \to \exp(-e^{-x}) \text{ as } N \to \infty.
\]
\end{example}

\section{Key lemmas}\label{sect4}

As discussed in Section~\ref{S_proof_outline}, the proof of Theorem~\ref{T_main_precise} relies on counting the nonzero terms of the form \eqref{eq_proof_sketch_4} and bounding their size. This was done under a smallness assumption on $T$ which allows us to truncate the chaos expansion \eqref{eq_chaos} at a finite level. In order to perform this truncation without any smallness assumption on $T$, we have to partition the interval $(0,T]$ into a sufficiently large number $n$ of subintervals $(T_{\alpha-1},T_\alpha]$, $\alpha \in [n]$. Doing so leads to expressions analogous to \eqref{eq_proof_sketch_4} but more complex, and it is those expressions that we need to control. Lemmas~\ref{L_iterated_integrals} and~\ref{count} control the number of nonzero expressions. Lemmas~\ref{tailest} and~\ref{lpestimationiterated} provide tail bounds on iterated stochastic integrals which are used to bound the size of the nonzero expressions and control the error that we commit when truncating the chaos expansions, among other things. The proofs of the lemmas are given in Section~\ref{S_proofs_of_key_lemmas}.

We work with the notation and assumptions of Section~\ref{S_results}. In particular, Assumptions~\ref{ass1} and \ref{ass2} are in force. We also use the notation
\[
\Fcal^V_t = \sigma(X^i_s,W^i_s \colon s \le t, i \in V) \text{ for any index set } V \subset \N,
\]
and write $\mathbb L$ for the space of all progressively measurable processes $Y$ with locally integrable moments, $\int_0^t \E[ |Y_s|^p ] ds < \infty$ for all $t \in \R_+$ and $p \in \N$.

We fix a family of progressively measurable processes $\nG^{\ir\jr} \in \mathbb{L}$, $\ir,\jr \in [N]$, such that $\nG^{\ir \jr}$ is adapted to the filtration $(\Fcal_t^{\{\ir,\jr\}})_{t \ge 0}$, and introduce the iterated integral notation
\begin{equation} \label{iterated_integral_I}
    I_{{\bm i},{\bm j}}^{N}(s, t) = \int_s^t G_{t_1}^{i_1,j_1} \int_s^{t_1} G_{t_2}^{i_2,j_2}\cdots\int_s^{t_{k-1}}G_{t_k}^{i_k,j_k} dW_{t_k}^{i_k}\cdots dW_{t_1}^{i_1}
\end{equation}
for any $k \in \N$ and any multiindices ${\bm i} = (\ir_1,\ldots,\ir_k) \in [N]^k$ and ${\bm j} = (\jr_1,\ldots,\jr_k) \in [N]^k$. Our first key lemma is the following, where later on the random variable $\Psi$ will be instantiated as products of indicators as in \eqref{eq_proof_sketch_4}.

\begin{lemma}[criteria for zero expectation] \label{L_iterated_integrals}
Assume for all $V \subset [N]$, $\ir \in V$, $\jr \notin V$ that
\begin{equation} \label{L_iterated_integrals_eq_0}
\E[ \nG^{\ir\jr}_s \mid \Fcal_t^V ] = 0, \quad s \le t.
\end{equation}
Let $T \ge 0$ and $n \in \N$. Consider an increasing finite sequence $0 = T_0 \le \cdots \le T_n = T$ and fix natural numbers $k_1,\ldots,k_n \in \N$. For each $\alpha \in [n]$, fix two $k_\alpha$-tuples
\[
{\bm i_{\alpha}} = (\ir_{\alpha, 1}, \ldots,\ir_{\alpha, k_\alpha}) \in [N]^{k_\alpha},
\qquad
{\bm j_{\alpha}} = (\jr_{\alpha, 1}, \ldots,\jr_{\alpha, k_\alpha}) \in [N]^{k_\alpha}.
\]
Finally, let $K \subset [N]$ and consider a bounded $\Fcal_T^K$-measurable random variable $\Psi$. Assume at least one of the following conditions is satisfied:
\begin{enumerate}
\item\label{L_iterated_integrals_1} there exist some $\beta \in [n]$ and $\ell_0 \in [k_{\beta}]$ such that
\begin{equation} \label{L_iterated_integrals_eq_1}
\ir_{\beta, \ell_0} \notin K \cup \{\jr_{\beta, 1}, \ldots, \jr_{\beta, \ell_0-1} \} \cup \bigcup_{\alpha = \beta + 1}^n \{\jr_{\alpha, 1},\ldots,\jr_{\alpha, k_\alpha} \},
\end{equation}
where $\{\jr_{\beta, 1}, \ldots, \jr_{\beta, \ell_0-1} \}$ is regarded as the empty set when $\ell_0 = 1$,

\item\label{L_iterated_integrals_2} one has
\[
\jr_{1, k_1} \notin K \cup \{\jr_{1,1}, \ldots, \jr_{1,k_1-1} \} \cup \bigcup_{\alpha = 2}^n \{\jr_{\alpha,1},\ldots,\jr_{\alpha, k_\alpha} \}.
\]
\end{enumerate}
Then
\begin{equation} \label{eq_L_iterated_integrals_2}
\E\left[ \Psi \prod_{\alpha \in [n]} I_{{\bm i_{\alpha}},{\bm j_{\alpha}}}^{N}(T_{\alpha - 1} , T_{\alpha}) \right] = 0.
\end{equation}
\end{lemma}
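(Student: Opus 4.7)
The argument is a careful conditioning procedure that isolates a single index $\gamma$ whose structural role forces the expectation to vanish. Set $\gamma = \ir_{\beta, \ell_0}$ under Condition~\ref{L_iterated_integrals_1} and $\gamma = \jr_{1, k_1}$ under Condition~\ref{L_iterated_integrals_2}; in both cases $\gamma \notin K$, so $\Psi$ is measurable with respect to $\Fcal^V_T$, where $V := [N] \setminus \{\gamma\}$. I first record a preliminary fact: if $\gamma \in V$ and $H$ is a progressively measurable process satisfying $\E[H_r \mid \Fcal^V_t] = 0$ for a.e.\ $r \le t$ (with suitable integrability), then $\E[\int_s^t H_r\,dW^\gamma_r \mid \Fcal^V_t] = 0$. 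This follows by approximating $H$ with elementary integrands and noting that the increments of $W^\gamma$ are $\Fcal^V_t$-measurable because $\gamma \in V$.

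Next I would condition on $\Gcal := \Fcal^V_T \vee \Fcal_{T_{\beta-1}}$ (taking $\beta = 1$ under Condition~\ref{L_iterated_integrals_2}), under which $W^\gamma$ becomes a standard Brownian motion on $(T_{\beta-1}, T]$ while all non-$\gamma$ processes are fully determined. Writing $A_\alpha := I^N_{{\bm i}_\alpha, {\bm j}_\alpha}(T_{\alpha-1}, T_\alpha)$, the factors with $\alpha < \beta$ are $\Gcal$-measurable, so the task reduces to showing
\[
\E\left[ A_\beta \prod_{\alpha > \beta} A_\alpha \,\Big|\, \Gcal \right] = 0.
\]

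The heart of the argument is an induction on the levels of $A_\beta$ from the innermost position $(\beta, k_\beta)$ outward to $(\beta, 1)$, showing that every partial iterated integral whose outermost level lies at or above $(\beta, \ell_0)$ has $\Gcal$-conditional mean zero. The base case uses either the conditional mean-zero property \eqref{L_iterated_integrals_eq_0} applied to the innermost integrand $G^{\ir_{1, k_1}, \gamma}$ (under Condition~\ref{L_iterated_integrals_2}) or the martingale property of integration against $dW^\gamma$ (under Condition~\ref{L_iterated_integrals_1}), together with the preliminary fact. The inductive step at level $\ell < \ell_0$ exploits that $\jr_{\beta, \ell} \neq \gamma$ by Condition~\ref{L_iterated_integrals_1}: if $\ir_{\beta, \ell} \in V$ then the integrand $G^{\ir_{\beta, \ell}, \jr_{\beta, \ell}}$ is $\Gcal$-measurable and the preliminary fact propagates the mean-zero property, while if $\ir_{\beta, \ell} = \gamma$ the integration against $dW^\gamma$ creates another martingale in the enlarged filtration $\Gcal^\gamma_t := \Gcal \vee \Fcal^\gamma_t$. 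To absorb the factors $\prod_{\alpha > \beta} A_\alpha$ into this martingale framework, one uses that they contribute integrations on the disjoint intervals $(T_{\alpha-1}, T_\alpha] \subset (T_\beta, T]$, which combine with $A_\beta$ into a single nested $\Gcal^\gamma_t$-martingale on $[T_{\beta-1}, T]$ starting from zero at $T_{\beta-1}$; its terminal expectation therefore vanishes.

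The principal technical obstacle is the careful index bookkeeping: $\gamma$ may coincide with $\jr_{\beta, \ell_0}$ under Condition~\ref{L_iterated_integrals_1}, may appear as an $\ir$-index at many other positions (inside block $\beta$ or in later blocks), and may interact with $\Psi$ through $X^\gamma$ only in restricted ways. Conditioning on $\Gcal$ collapses all non-$\gamma$ randomness and reduces the whole expression to a single-Brownian-motion martingale problem driven by $W^\gamma$, but tracking the various index collisions and verifying that the martingale property is preserved through every integration level is what makes the argument technical.
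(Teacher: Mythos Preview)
Your approach diverges from the paper's in a substantive way, and the divergence creates a genuine gap. The paper does \emph{not} take $V = [N]\setminus\{\gamma\}$. Instead, under condition~\ref{L_iterated_integrals_1} it first selects $\beta$ \emph{maximal} and then $\ell_0$ \emph{minimal} among all pairs satisfying \eqref{L_iterated_integrals_eq_1}, and defines $V$ to be the small set $K \cup \{j_{\beta,1},\ldots,j_{\beta,\ell_0-1}\} \cup \bigcup_{\alpha>\beta}\{j_{\alpha,1},\ldots,j_{\alpha,k_\alpha}\}$. The point of these extremal choices is that every $i$-index in blocks $\alpha>\beta$ and every $i$-index at levels $\ell<\ell_0$ of block $\beta$ automatically lands in $V$, so all factors $A_\alpha$ with $\alpha>\beta$ are $\Fcal^V_T$-measurable and simply pull out of the conditional expectation. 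One then peels the outer $\ell_0-1$ layers of $A_\beta$ (Lemma~\ref{L_cond_exp_1}) and applies Lemma~\ref{L_cond_exp_2} at level $\ell_0$. No induction through later blocks is needed.

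By contrast, with your choice $V=[N]\setminus\{\gamma\}$ and an arbitrary $(\beta,\ell_0)$, the index $\gamma$ may recur as an $i$-index in later blocks, so the $A_\alpha$ with $\alpha>\beta$ are not $\Gcal$-measurable. Your proposed remedy---that $\prod_{\alpha\ge\beta}A_\alpha$ forms ``a single nested $\Gcal^\gamma_t$-martingale on $[T_{\beta-1},T]$''---is false: whenever a level integrates against $dW^{i}$ with $i\in V$, that Brownian motion is already fully $\Gcal$-measurable, so the integral is \emph{not} a martingale increment in the filtration $(\Gcal^\gamma_t)$. The product is therefore not a $\Gcal^\gamma_t$-martingale in general. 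Your plan is salvageable, but only via a level-by-level induction through \emph{every} block $\alpha\ge\beta$, alternating between (a) the martingale property when the current level integrates against $dW^\gamma$, and (b) your preliminary fact together with $\Gcal$-measurability of $G^{i_{\alpha,\ell},j_{\alpha,\ell}}$ when $i_{\alpha,\ell}\in V$---the latter requiring $j_{\alpha,\ell}\ne\gamma$, which for $\alpha>\beta$ is exactly what \eqref{L_iterated_integrals_eq_1} guarantees. You have sketched this induction only within block $\beta$; the extension to later blocks is the missing piece, and it is more delicate than your martingale sentence suggests. The paper's extremal selection of $(\beta,\ell_0)$ sidesteps all of this.
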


The criteria \ref{L_iterated_integrals_1} and \ref{L_iterated_integrals_2} in Lemma~\ref{L_iterated_integrals} for zero expectation are of a combinatorial nature involving index set membership. The following lemma counts the number ways in which these conditions can fail, thereby bounding the number of nonzero terms.

\begin{lemma}[counting lemma]\label{count} 
Fix natural numbers $n, N, \kappa, k_1,\ldots,k_n$. The number of ways we can pick a subset $K \subset [N]$ with $|K| = \kappa$ along with tuples $\ib_{\alpha}, \jb_{\alpha} \in [N]^{k_{\alpha}}$ for all $\alpha \in [n]$ such that both properties \ref{L_iterated_integrals_1} and \ref{L_iterated_integrals_2} of Lemma~\ref{L_iterated_integrals} fail to hold is bounded by
\[
\binom{N}{\kappa} \kappa (\kappa+1) \cdots (\kappa+S) N^{S-1},
\]
where $S = k_1 + \cdots + k_n$.
\end{lemma}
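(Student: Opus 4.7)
My plan is to enumerate the configurations in the order: choose $K$ first, then the $j$-indices, then the $i$-indices, using property~\ref{L_iterated_integrals_2} to bound the count of the $j$'s and property~\ref{L_iterated_integrals_1} to bound the count of the $i$'s. Fixing $K$ contributes the factor $\binom{N}{\kappa}$. Since the failure of property~\ref{L_iterated_integrals_2} only constrains $j_{1, k_1}$, I would pick the other $S - 1$ many $j$-indices freely in $[N]$ and observe that $j_{1, k_1}$ must then lie in the explicit set
\[
K \cup \{j_{1, 1}, \ldots, j_{1, k_1 - 1}\} \cup \bigcup_{\alpha = 2}^n \{j_{\alpha, 1}, \ldots, j_{\alpha, k_\alpha}\},
\]
of cardinality at most $\kappa + S - 1$. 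This yields at most $N^{S-1}(\kappa + S - 1)$ configurations of the $j$-tuples.

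For the $i$-indices, with the $j$'s already fixed, I would use that the failure of property~\ref{L_iterated_integrals_1} forces each $i_{\beta, \ell_0}$ to lie in a set whose cardinality is at most $\kappa + (\ell_0 - 1) + s_\beta$, where $s_\beta = k_{\beta + 1} + \cdots + k_n$ (with $s_n = 0$ and $s_0 = S$). Taking products of these size bounds, for each fixed $\beta$ the inner product over $\ell_0 \in [k_\beta]$ is the rising factorial $(\kappa + s_\beta)(\kappa + s_\beta + 1) \cdots (\kappa + s_\beta + k_\beta - 1)$. Since $s_\beta + k_\beta = s_{\beta - 1}$, the outer product over $\beta \in [n]$ telescopes to $\kappa (\kappa + 1) \cdots (\kappa + S - 1)$.

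Combining the three factors gives an upper bound of $\binom{N}{\kappa} N^{S-1}(\kappa + S - 1)\, \kappa(\kappa + 1) \cdots (\kappa + S - 1)$, which, after using $(\kappa + S - 1) \le (\kappa + S)$, yields the stated bound. The whole argument is essentially combinatorial bookkeeping, so the only delicate point is the sequencing of the enumeration: the $j$-indices must be chosen before the $i$-indices so that the constraint sets in property~\ref{L_iterated_integrals_1} are already defined, and $j_{1, k_1}$ must be selected last among the $j$-indices so that the constraint set used in property~\ref{L_iterated_integrals_2} is populated by the previously chosen $j$'s. I do not anticipate any serious obstacle beyond this bookkeeping.
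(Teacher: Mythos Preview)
Your proposal is correct and follows essentially the same approach as the paper: choose $K$, then the $j$-indices (with $j_{1,k_1}$ last, constrained to a set of size at most $\kappa+S-1$), then the $i$-indices (each constrained by the failure of~\ref{L_iterated_integrals_1} to a set whose sizes telescope to the rising factorial $\kappa(\kappa+1)\cdots(\kappa+S-1)$), and finally absorb the extra factor $\kappa+S-1$ into $\kappa+S$. The paper's proof differs only in presentation, not in substance.
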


We next develop bounds on iterated stochastic integrals. The following lemma will allow us to truncate the chaos expansions of ratios $Z^N_t / Z^N_s$ at levels that do not need to increase with $N$ to comply with given error tolerances. Note that the lemma gives an upper bound that is summable in $m$ only if $t-s$ is sufficiently small. This is the reason we are forced to partition $[0,T]$ into subintervals when proving Theorem~\ref{T_main_precise} without any smallness assumption on $T$. 

\begin{lemma}[first iterated integral $L^p$ estimate]\label{tailest}
Let
\[
I_m^N(s,t) = \int_s^t\int_s^{t_1}\cdots\int_s^{t_{m-1}}dM_{t_m}^{N}dM_{t_{m-1}}^{N}\cdots dM_{t_1}^{N}
\]
where $M^N$ is defined in \eqref{eq_MN} and it is understood that $I^N_1(s,t) = M^N_t - M^N_s$. Then, for any $N,m, p \in \mathbb{N}$, any $T \in (0,\infty)$, and all $s, t \in \left[0, \, T\right]$ we have
\[
\| I_m^N(s,t)  \|_{2p} \leq \left(C(T)p\sqrt{t - s}\right)^m
\]
where the constant $C(T)$ only depends on $T$ and the bounds from Assumptions~\ref{ass1}--\ref{ass2}.
\end{lemma}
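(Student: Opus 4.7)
The plan is to induct on $m$, using the sharp Burkholder--Davis--Gundy (BDG) bound $\|Y_t\|_{2p} \le C\sqrt{p}\,\|\langle Y\rangle_t\|_p^{1/2}$ for continuous local martingales. For the base case $m=1$, we have $I^N_1(s,t) = M^N_t - M^N_s$ with quadratic variation increment $\int_s^t \sigma^2_N(u)\, du$, where $\sigma^2_N(u) := \sum_{i=1}^N (\Delta B^{i,N}_u)^2$. The uniform boundedness of $\partial_r B$ in Assumption~\ref{ass1} gives
\[
|\Delta B^{i,N}_u| \le \|\partial_r B\|_\infty \left|\int g(u, X^i_{[0,u]}, \bm y_{[0,u]})(\mu^N_u - \mu_u)(d\bm y)\right|,
\]
and then \eqref{part2ass2} of Assumption~\ref{ass2}, combined with the elementary bound $(p!)^{1/p} \le p$, yields $\|\Delta B^{i,N}_u\|_{2p}^2 \le \|\partial_r B\|_\infty^2\, p\, K(u)/N$. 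Summing over $i$ produces the auxiliary estimate $\|\sigma^2_N(u)\|_p \le \|\partial_r B\|_\infty^2\, p\, K(u)$, uniformly in $p\ge 1$. Combining this with BDG and Minkowski's integral inequality closes the base case with a constant of order $\|\partial_r B\|_\infty\sqrt{K(T)}$.

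For the inductive step ($m \ge 2$), I would write $I^N_m(s,t) = \int_s^t I^N_{m-1}(s,u)\, dM^N_u$, so that $\langle I^N_m(s,\cdot)\rangle_t = \int_s^t (I^N_{m-1}(s,u))^2 \sigma^2_N(u)\, du$. Applying BDG, then Minkowski's integral inequality, and then Hölder's inequality with any conjugate exponents $\alpha, \beta > 1$ gives
\[
\|I^N_m(s,t)\|_{2p}^2 \le C_0\, p \int_s^t \|I^N_{m-1}(s,u)\|_{2p\alpha}^2\, \|\sigma^2_N(u)\|_{p\beta}\, du.
\]
The crucial choice is $\alpha = m/(m-1)$ and $\beta = m$. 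Inserting the inductive hypothesis $\|I^N_{m-1}(s,u)\|_{2p\alpha} \le (C(T) p\alpha)^{m-1}(u-s)^{(m-1)/2}$ and the auxiliary bound $\|\sigma^2_N(u)\|_{p\beta} \le \|\partial_r B\|_\infty^2\, pm\, K(T)$, and using $\int_s^t (u-s)^{m-1}\,du = (t-s)^m/m$ and $\alpha^{2(m-1)} = (1 + 1/(m-1))^{2(m-1)} \le e^2$, the powers of $p$ collect to $p^{2m}$, and the factor $\beta = m$ is cancelled exactly by the $1/m$ from the time integral. Taking square roots yields $\|I^N_m(s,t)\|_{2p} \le D\,(C(T))^{m-1}\, p^m\, (t-s)^{m/2}$ for some constant $D$ depending only on $T$ and the data. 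Choosing $C(T)$ from the outset to exceed both $D$ and the base-case constant closes the induction.

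The main obstacle is to prevent a geometric blowup of the implicit constant when iterating BDG. A naive application with Cauchy--Schwarz ($\alpha = \beta = 2$) would force the inductive hypothesis at level $L^{4p}$ rather than $L^{2p}$, doubling the $p$-argument at each step and producing an unavoidable factor of $2^{m-1}$ that cannot be absorbed into a uniform $(C(T))^m$. The key insight is that the Hölder exponent $\alpha$ can be taken as close to $1$ as one wishes; the precise choice $\alpha = m/(m-1)$ keeps $\alpha^{m-1}$ bounded by $e$ uniformly in $m$, and the accompanying linear growth $\beta = m$ in the norm of $\sigma^2_N$ is then neutralized exactly by the $1/m$ arising from the iterated time integral. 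A minor technical point is that the inductive hypothesis is invoked at the non-integer exponent $p\alpha$; this is handled either by formulating and proving the lemma for all real $p\ge 1$ (the argument goes through verbatim) or by rounding up to $\lceil p\alpha\rceil$ and absorbing the $O(1)$ cost into $C(T)$.
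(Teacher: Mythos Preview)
Your inductive argument is correct and genuinely different from the paper's. The paper does \emph{not} induct on $m$; instead it invokes the sharp Carlen--Kree estimate for iterated integrals of a single martingale,
\[
\|I_m^N(s,t)\|_{2p} \le \frac{(2pm)^{m/2}3^m}{m!}\,\big\|\langle M^N\rangle_{s,t}^{1/2}\big\|_{2pm}^m,
\]
then bounds $\|\langle M^N\rangle_{s,t}^{1/2}\|_{2pm}$ via H\"older and Assumption~\ref{ass2}, and finishes with Stirling to control $((pm)!)^{1/(2p)}/m!$. Your route is more elementary in that it only needs the one-step BDG inequality rather than the Carlen--Kree black box; the price is the careful bookkeeping of H\"older exponents, where your choice $\alpha=m/(m-1)$, $\beta=m$ is exactly what makes the $1/m$ from $\int_s^t (u-s)^{m-1}\,du$ cancel the growth $\beta=m$ in $\|\sigma_N^2\|_{p\beta}$, while $\alpha^{m-1}\le e$ stays bounded. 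The paper's approach is shorter once Carlen--Kree is granted; yours is self-contained.

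One small correction: of your two proposed fixes for the non-integer exponent $p\alpha$, only the first (prove the statement for all real $p\ge 1$) actually works. The rounding fix fails: for $p=1$ one has $p\alpha = m/(m-1)\in(1,2]$, so $\lceil p\alpha\rceil=2$, and the induction hypothesis then delivers $(C(T)\cdot 2)^{m-1}$ in place of $(C(T)p\alpha)^{m-1}$, an extra factor of order $2^{m-1}/e$ that cannot be absorbed into a single $C(T)$. Fortunately the real-$p$ formulation goes through cleanly: the auxiliary bound $\|\sigma_N^2(u)\|_q \le C q K(u)$ extends from integer $q$ to all real $q\ge 1$ by $\|\cdot\|_q \le \|\cdot\|_{\lceil q\rceil}$ and $\lceil q\rceil \le 2q$, at the cost of a harmless factor of $2$, and BDG holds for all real exponents. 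So state and prove the lemma for real $p\ge 1$ from the outset, and drop the rounding alternative.
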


\begin{remark}
Note that we only consider $L^{2p}$ norms for positive integers $p$, which is all that is needed later on. This is why Assumption~\ref{ass2} only involves even integer moments.
\end{remark}

The proof of Lemma~\ref{tailest} relies on the following sharp iterated integral estimate, valid for any continuous local martingale $M$ and any $p \in [1,\infty)$, which follows from \cite[Theorem~1]{CAKR91} on noting that $1 + \sqrt{1 + 1/(2p)} < 3$ for any such $p$:
\begin{equation}\label{sharpestiapplied}
\left\Vert \int_s^t\int_s^{t_1}\cdots\int_s^{t_{m-1}}dM_{t_m} dM_{t_{m-1}} \cdots dM_{t_1} \right\Vert_{2p} \leq \frac{(2pm)^{m/2}3^m}{m!} \left\Vert \langle M \rangle_{s,t}^{1/2} \right\Vert_{2pm}^{m},
\end{equation}
where we write $\langle M \rangle_{s,t} = \langle M \rangle_t - \langle M \rangle_s$ for brevity.

While \eqref{sharpestiapplied} is instrumental for proving Lemma~\ref{tailest}, it cannot be used to bound the iterated integrals appearing in \eqref{eq_L_iterated_integrals_2}, which involve several different local martingales. In order to control the nonzero terms of the form \eqref{eq_L_iterated_integrals_2} we will instead use a weaker estimate obtained by repeated application of the BDG and H\"older inequalities. Fortunately this is sufficient thanks to the sharp control on the number of nonzero terms afforded by Lemmas~\ref{L_iterated_integrals} and~\ref{count}. The following general estimate for iterated stochastic integrals involving several continuous local martingales serves this purpose, and it is also used in the proof of Lemma~\ref{L_iterated_integrals} as well as to control linearization errors when reducing from general drift coefficients $B$ to linear ones in the proof of the main result.

\begin{lemma}[second iterated integral $L^p$ estimate] \label{lpestimationiterated}
For any set of $k \in \N$ continuous local martingales $M^1,\ldots,M^k$ and any $p \in (1,\infty)$ we have the estimate
\[
\left\Vert\int_s^t\int_s^{t_1}\cdots\int_s^{t_{k-1}}dM_{t_k}^k\cdots dM_{t_1}^1\right\Vert_p\leq (4\sqrt{p})^{k} 2^{k(k-1)/4}\prod_{\ell=1}^{k}\|\langle M^{\ell} \rangle_{s,t}^{1/2}\|_{2^{\ell}p} .
\]
\end{lemma}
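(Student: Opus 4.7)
The plan is to prove the inequality by induction on the nesting depth $k$, peeling off the outermost stochastic integral at each step. For the base case $k=1$ the integral reduces to $M^1_t - M^1_s$, and the Burkholder--Davis--Gundy inequality (with the standard continuous-time constant bounded by $2\sqrt{p}$) together with the trivial bound $\|X\|_p \le \|X\|_{2p}$ on a probability space yields
\[
\|M^1_t - M^1_s\|_p \le 2\sqrt{p}\,\|\langle M^1\rangle_{s,t}^{1/2}\|_{2p},
\]
which is dominated by the target bound $(4\sqrt{p})\cdot 2^0\cdot\|\langle M^1\rangle_{s,t}^{1/2}\|_{2p}$.

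For the inductive step, I would write the $k$-fold iterated integral as $Y^{(k)}_t = \int_s^t Z_{t_1}\,dM^1_{t_1}$, where
\[
Z_{t_1} = \int_s^{t_1}\!\!\cdots\!\int_s^{t_{k-1}} dM^k_{t_k}\cdots dM^2_{t_2}
\]
is a $(k-1)$-fold iterated integral and hence a continuous local martingale in $t_1$. Applying BDG to $Y^{(k)}$, bounding $Z_{t_1}^2 \le (Z^*_t)^2$ pointwise inside the $d\langle M^1\rangle$ integral with $Z^*_t = \sup_{t_1 \in [s,t]}|Z_{t_1}|$, and then splitting by Cauchy--Schwarz in $L^{p/2}$ gives
\[
\|Y^{(k)}_t\|_p \le 2\sqrt{p}\,\|Z^*_t\|_{2p}\,\|\langle M^1\rangle_{s,t}^{1/2}\|_{2p}.
\]
Doob's $L^{2p}$ maximal inequality (with constant $2p/(2p-1)\le 2$ for $p\ge 1$) gives $\|Z^*_t\|_{2p}\le 2\|Z_t\|_{2p}$, and the induction hypothesis applied to $Z_t$ at the doubled exponent $2p$ produces
\[
\|Z_t\|_{2p} \le (4\sqrt{2p})^{k-1}\, 2^{(k-1)(k-2)/4}\prod_{\ell=2}^{k}\|\langle M^\ell\rangle_{s,t}^{1/2}\|_{2^{\ell} p}.
\]
Multiplying these three estimates produces the prefactor $4\sqrt{p}\cdot(4\sqrt{2p})^{k-1}\cdot 2^{(k-1)(k-2)/4} = 4^k p^{k/2}\cdot 2^{(k-1)/2 + (k-1)(k-2)/4}$, and a short computation shows that the exponent of $2$ simplifies to $k(k-1)/4$, closing the induction and recovering the claimed constant.

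Once the correct sequence BDG $\to$ Cauchy--Schwarz $\to$ Doob $\to$ induction is identified, the argument is essentially bookkeeping; the only care needed is to track how the $L^r$ exponent doubles at each inductive step, which is precisely what forces the $L^{2^{\ell} p}$ norm of $\langle M^\ell\rangle_{s,t}^{1/2}$ to appear in the product, and to verify the exponent arithmetic on the power of $2$. A minor technicality is that $Z$ is only a local martingale, so BDG and Doob should first be applied to stopped processes $Z^{\tau_n}$ with $\tau_n \uparrow \infty$; Fatou's lemma then extends the bound to $Z$ itself since the right-hand side is monotone under such stopping and finite whenever the claimed right-hand side is finite.
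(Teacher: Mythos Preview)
Your proof is correct and follows essentially the same approach as the paper: induction on $k$, with each step using the sharp BDG inequality with constant $2\sqrt{p}$, then H\"older/Cauchy--Schwarz to split off $\|\langle M^1\rangle_{s,t}^{1/2}\|_{2p}$, then Doob's maximal inequality with constant $\le 2$, and finally the induction hypothesis at exponent $2p$. Your arithmetic on the constants matches the paper's, and your remark about localizing via stopping times is a reasonable technical addendum that the paper leaves implicit.
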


We end this section with an algebraic estimate which will allow us to combine Lemma~\ref{count} and Lemma~\ref{lpestimationiterated} to show that \eqref{eq_proof_sketch_2} indeed tends to zero as $N \to \infty$.

\begin{lemma}\label{combinatoryinequality} For any $C \in (1,\infty)$, $N, S \in \N$, one has the inequality
\[
\sum_{\kappa=1}^N\binom{N}{\kappa}\kappa(\kappa+1)\cdots(\kappa+S)\left(\frac{C}{N}\right)^{\kappa(1 - 1/\log N)}\leq (S+2)(S+1)^{2(S+1)}e^{Ce}(Ce)^{S+1}.
\]
\end{lemma}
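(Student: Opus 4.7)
The plan is to dominate the left-hand side by an infinite power series of the form $\sum_{\kappa \ge 1} \kappa(\kappa+1)\cdots(\kappa+S)(Ce)^\kappa/\kappa!$ via elementary estimates, and then to evaluate that series using the Touchard (Bell) polynomial identity.

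The reductions proceed as follows. First, I would write $(C/N)^{\kappa(1-1/\log N)} = (C/N)^\kappa \cdot (N/C)^{\kappa/\log N}$ and observe that $(N/C)^{1/\log N} = \exp(1 - (\log C)/\log N) \le e$ for $N \ge 2$ and $C > 1$, since $\log C > 0$ and $\log N > 0$; hence $(C/N)^{\kappa(1-1/\log N)} \le (Ce/N)^\kappa$. Combined with $\binom{N}{\kappa} \le N^\kappa/\kappa!$, the sum is bounded by $\sum_{\kappa=1}^\infty \kappa(\kappa+1)\cdots(\kappa+S)(Ce)^\kappa/\kappa!$. Next, since $\kappa+j \le \kappa(j+1)$ for $\kappa \ge 1$ and $j \ge 0$, the rising factorial satisfies $\kappa(\kappa+1)\cdots(\kappa+S) \le (S+1)!\,\kappa^{S+1}$. (The boundary case $N=1$ leaves only $\kappa=1$ and is handled trivially.)

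It then remains to bound $\sum_{\kappa \ge 1}\kappa^{S+1} x^\kappa/\kappa!$ at $x = Ce$. By the Touchard identity this equals $B_{S+1}(x) e^x$, where $B_{n}(x)$ is a polynomial of degree $n$ with nonnegative coefficients summing to the $n$-th Bell number $B_n$. For $x \ge 1$ one has $B_{n}(x) \le x^{n} B_n$, and a short induction on the recurrence $B_{n+1} = \sum_{k=0}^n \binom{n}{k} B_k$ (using $\sum_{j=0}^n 1/j! \le n+1$) gives $B_n \le n!$. Combined with $(S+1)! \le (S+1)^{S+1}$, this yields
\[
\sum_{\kappa \ge 1}\frac{\kappa(\kappa+1)\cdots(\kappa+S)}{\kappa!}(Ce)^\kappa \le ((S+1)!)^2 (Ce)^{S+1} e^{Ce} \le (S+1)^{2(S+1)}(Ce)^{S+1} e^{Ce},
\]
which is the claim with a factor of $(S+2)$ to spare. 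There is no real obstacle here: the only non-routine choices are dominating the rising factorial by a power of $\kappa$ (so that the Touchard series applies) and the crude $B_n \le n!$ bound, which accounts for the generous $(S+2)$ slack.
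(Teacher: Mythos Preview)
Your proof is correct and takes a genuinely different route from the paper's. The paper exploits the algebraic identity
\[
\sum_{\kappa=1}^{N}\binom{N}{\kappa}\kappa(\kappa+1)\cdots(\kappa+S)\,x^{\kappa}
= x\,\frac{d^{S+1}}{dx^{S+1}}\Big(x^S(1+x)^N\Big),
\]
then applies Leibniz's rule to the right-hand side and plugs in $x=(C/N)^{1-1/\log N}\le Ce/N$. You instead discard the finite-$N$ structure immediately via $\binom{N}{\kappa}\le N^{\kappa}/\kappa!$ (using the same estimate $(C/N)^{1-1/\log N}\le Ce/N$), dominate the rising factorial by $(S+1)!\,\kappa^{S+1}$, and close with the Dobinski/Touchard identity $\sum_{\kappa\ge 0}\kappa^{n}x^{\kappa}/\kappa! = B_n(x)e^{x}$ together with the crude Bell-number bound $B_n\le n!$. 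Your argument is arguably more elementary (no repeated differentiation or Leibniz expansion), at the modest cost of quoting the Touchard identity; it also makes transparent why the final bound has the clean form $((S+1)!)^{2}(Ce)^{S+1}e^{Ce}$, with the stated factor $S+2$ unused. One cosmetic point: the expression $(C/N)^{1-1/\log N}$ is not defined for $N=1$, so your remark that this case ``is handled trivially'' should simply be replaced by the observation that the statement is only meaningful for $N\ge 2$.
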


\section{Proofs of the key lemmas} \label{S_proofs_of_key_lemmas}

In this section we prove the lemmas presented in Section~\ref{sect4}. We start with the proof of Lemma~\ref{lpestimationiterated} because it is used in the proof of Lemma~\ref{L_iterated_integrals}.

\subsection{Proof of Lemma~\ref{lpestimationiterated}}

We prove the lemma by induction. The base case $k=1$ follows from the sharp BDG inequality $(7)$ in \cite{CAKR91} and H\"older's inequality. For the induction step, we assume that the inequality holds for any $p>1$ with $k$ replaced by $k-1$. Applying the sharp BDG inequality, H\"older's inequality, Doob's maximal inequality (e.g.\ Theorem 5.1.3 in \cite{COEL15} with $p$ replaced by $2p \geq 2$ and so $q \leq 2$), and finally the induction hypothesis yield
\begin{align*}
    \bigg\Vert\int_s^t\int_s^{t_1}&\cdots\int_s^{t_{k-1}}dM_{t_k}^k\cdots dM_{t_1}^1\bigg\Vert_p \\
    &\leq
    2\sqrt{p}\left\Vert\left(\int_s^t\left(\int_s^{t_1}\cdots\int_s^{t_{k-1}}dM_{t_k}^k\cdots dM_{t_2}^2\right)^2d\langle M^1\rangle_{t_1}\right)^\frac{1}{2}\right\Vert_p\\
    &\leq 2\sqrt{p}\left\Vert\sup_{s\leq t_1\leq t}\bigg|\int_s^{t_1}\cdots\int_s^{t_{k-1}}dM_{t_k}^k\cdots dM_{t_2}^2\bigg|\, \langle M^1\rangle_{s,t}^{1/2}\right\Vert_p\\
    &\leq 2\sqrt{p}\left\Vert\sup_{s\leq t_1\leq t}\bigg|\int_s^{t_1}\cdots\int_s^{t_{k-1}}dM_{t_k}^k\cdots dM_{t_2}^2\bigg|\right\Vert_{2p}\, \left\Vert \langle M^1\rangle_{s,t}^{1/2}\right\Vert_{2p}\\
    &\leq 2\sqrt{p}\, 2\left\Vert\int_s^{t}\cdots\int_s^{t_{k-1}}dM_{t_k}^k\cdots dM_{t_2}^2\right\Vert_{2p}\, \left\Vert \langle M^1\rangle_{s,t}^{1/2} \right\Vert_{2p}\\
    &\leq 2\sqrt{p}\, 2\, (4\sqrt{2p} )^{k-1} 2^{(k-1)(k-2)/4}\prod_{\ell=1}^{k-1}\left\Vert \langle M^{\ell+1}\rangle_{s,t}^{1/2}\right\Vert_{2^{\ell+1}p} \left\Vert \langle M^1\rangle_{s,t}^{1/2} \right\Vert_{2p}\\
    &=(4\sqrt{p})^{k} 2^{k(k-1)/4}\prod_{\ell=1}^{k}\| \langle M^{\ell} \rangle_{s,t}^{1/2}\|_{2^{\ell}p}.
\end{align*}

\subsection{Proof of Lemma~\ref{L_iterated_integrals}}

We will need the following two auxiliary lemmas on conditioning, the proofs of which are a trivial modification of the proof of Lemma 2.1.4 in \cite{LEDThesis}.

\begin{lemma} \label{L_cond_exp_1}
For any Brownian motion $W$, two processes $a, b \in \mathbb{L}$, and a $\sigma$-algebra $\Gcal$ such that $a(s)$ and $W(s)$ are $\Gcal$-measurable for $s \le t$, one has
\[
\E\left[ \int_0^t a(s) b(s) dW(s) \Mid \Gcal \right] = \int_0^t a(s) \E[ b(s) \mid \Gcal ]  dW(s).
\]
\end{lemma}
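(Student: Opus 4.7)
My plan is to establish the identity first for simple predictable processes $b$ and then pass to an $L^2$ limit. Since $b \in \mathbb{L}$, in particular $\int_0^t \E[b(s)^2]\,ds < \infty$, so by the standard density result used to construct the It\^o integral, $b$ is the $L^2([0,t]\times\Omega)$ limit of bounded simple predictable processes
\[
b^n(s) = \sum_{i=0}^{k_n-1} \beta^n_i \,\mathbf{1}_{(s^n_i,\,s^n_{i+1}]}(s),
\]
where $0 = s^n_0 < \cdots < s^n_{k_n} = t$ and each $\beta^n_i$ is bounded and $\Fcal_{s^n_i}$-measurable.

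For each such simple $b^n$ I would verify the identity directly. We have
\[
\int_0^t a(s) b^n(s)\,dW(s) = \sum_{i=0}^{k_n-1} \beta^n_i \int_{s^n_i}^{s^n_{i+1}} a(s)\,dW(s).
\]
The key observation is that each It\^o integral $\int_{s^n_i}^{s^n_{i+1}} a(s)\,dW(s)$ is $\Gcal$-measurable, because $a$ and $W$ are $\Gcal$-measurable on $[0,t]$: approximating $a$ in $L^2$ by simple predictable processes built from its values at dyadic partition points of $[s^n_i, s^n_{i+1}]$, each approximating Riemann sum is an algebraic combination of $\Gcal$-measurable quantities, and passing to an a.s.\ convergent subsequence preserves $\Gcal$-measurability in the limit. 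Pulling these $\Gcal$-measurable factors out of the conditional expectation then gives
\[
\E\left[\int_0^t a(s) b^n(s)\,dW(s) \Mid \Gcal\right] = \sum_i \E[\beta^n_i \mid \Gcal]\int_{s^n_i}^{s^n_{i+1}} a(s)\,dW(s) = \int_0^t a(s)\,\E[b^n(s)\mid\Gcal]\,dW(s),
\]
which is the claimed identity at level $n$.

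To pass to the limit I would use It\^o's isometry. A H\"older estimate relying on $a,b\in\mathbb{L}$ (with an auxiliary localization on $\{|a|\le m\}$ if needed) gives $a\cdot b \in L^2([0,t]\times\Omega)$, and then $L^2$-convergence $b^n \to b$ implies $\int_0^t a(s) b^n(s)\,dW(s) \to \int_0^t a(s) b(s)\,dW(s)$ in $L^2(\Omega)$. Conditional expectation being an $L^2$-contraction, the left-hand sides converge. Applying the $L^2$-contraction property fiberwise yields $\E[b^n(s)\mid\Gcal] \to \E[b(s)\mid\Gcal]$ in $L^2([0,t]\times\Omega)$, and It\^o's isometry transfers this to $L^2$-convergence of the right-hand sides.

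The main obstacle will be the measurability claim that $\int_{s^n_i}^{s^n_{i+1}} a(s)\,dW(s)$ is $\Gcal$-measurable. This is not automatic because the It\^o integral is ordinarily constructed as an $L^2$-limit tied to the filtration generated by $W$ (and possibly larger), so one must carefully verify that restricting to simple approximations of $a$ expressed through its $\Gcal$-measurable partition-point values produces a limit that remains $\Gcal$-measurable. Once this technical point is settled, the algebraic manipulation at the simple-process level and the $L^2$ passage to the limit are routine, and the argument indeed parallels that of Lemma~2.1.4 in \cite{LEDThesis} as the paper already indicates.
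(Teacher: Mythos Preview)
Your approach is correct and follows the standard simple-process-plus-$L^2$-limit argument, which is precisely what the paper defers to by citing Lemma~2.1.4 in \cite{LEDThesis}. The one technical point you flag---$\Gcal$-measurability of $\int_{s^n_i}^{s^n_{i+1}} a(s)\,dW(s)$---is indeed the crux, and your resolution via $\Gcal$-measurable Riemann-sum approximations is the right one.
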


\begin{lemma} \label{L_cond_exp_2}
For any Brownian motion $W$, a processs $a \in \mathbb{L}$, and a $\sigma$-algebra $\Gcal$ such that $W$ is independent of $\Gcal$, one has
\[
\E\left[ \int_s^t a(u) dW(u) \Mid \Fcal_s \vee \Gcal \right] = 0, \quad s \le t.
\]
\end{lemma}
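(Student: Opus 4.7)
The strategy is an initial enlargement of filtration argument: I would show that $W$ remains a Brownian motion with respect to the enlarged filtration $(\Fcal_t \vee \Gcal)_{t \ge 0}$, which implies that $\int_0^\cdot a(u)\,dW(u)$ is still a (local) martingale in this enlarged filtration, and the desired identity then follows immediately from the martingale property applied to the increment between $s$ and $t$.

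The main technical step is showing that $W$ is still an $(\Fcal_t \vee \Gcal)$-Brownian motion. Since the finite-dimensional laws and sample paths of $W$ are unchanged, by L\'evy's characterization it suffices to verify that both $W$ and $W_t^2 - t$ are $(\Fcal_t \vee \Gcal)$-martingales, equivalently that $\E[W_v - W_u \mid \Fcal_u \vee \Gcal] = 0$ for $u < v$ and an analogous identity for the compensated square. I would verify the first via a $\pi$-system / monotone class argument on sets of the form $A \cap G$ with $A \in \Fcal_u$ and $G \in \Gcal$: the plan is to write
\[
\E[(W_v - W_u)\, \bm1_A \bm1_G] = \E\bigl[\bm1_G\, \E[(W_v - W_u)\bm1_A \mid \Gcal]\bigr],
\]
and use the fact that $(W_v - W_u, \bm1_A)$ is built from $W$ and $\Fcal_u$-information jointly independent of $\Gcal$ (in the paper's applications, $\Gcal$ arises from independent copies $(X^j, W^j)$, $j \in V$, disjoint from the index of $W$, which makes this joint independence transparent), to reduce the inner expectation to the unconditional one $\E[(W_v-W_u)\bm1_A] = 0$ coming from the $(\Fcal_t)$-martingale property of $W$. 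Replacing $W_v - W_u$ by $(W_v - W_u)^2 - (v-u)$ handles the second $\pi$-system check.

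With the enlarged-filtration Brownian property established, progressive measurability of $a$ with respect to $(\Fcal_t)$ upgrades immediately to progressive measurability with respect to $(\Fcal_t \vee \Gcal)$, and the $\mathbb{L}$-integrability assumption ensures $\int_0^\cdot a(u)\,dW(u)$ is a true martingale in the enlarged filtration (a standard localization with stopping times $\tau_n = \inf\{t : \int_0^t a(u)^2\,du \ge n\}$ will suffice if one prefers to work in the local martingale regime). The identity
\[
\E\!\left[\int_s^t a(u)\,dW(u) \mid \Fcal_s \vee \Gcal\right] = 0
\]
then reduces to the martingale property of this integral process between $s$ and $t$.

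The expected main obstacle is precisely the joint independence step: the bare hypothesis ``$W$ is independent of $\Gcal$'' does not literally imply that arbitrary $(\Fcal_t)$-measurable information is jointly independent of $\Gcal$, so some care is required in interpreting the hypothesis, most cleanly by reading $a$ as $(\Fcal_t^W)$-adapted or by invoking the product-space construction of $(\Omega, \Fcal, (\Fcal_t), \P)$ set up in Assumption~\ref{ass2}, where independence across indices $i \in \N$ supplies exactly the joint independence needed. This parallels the modification of Lemma~2.1.4 of \cite{LEDThesis} referenced by the authors.
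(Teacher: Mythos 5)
Your route---show that $W$ remains a Brownian motion for the enlarged filtration $(\Fcal_t \vee \Gcal)_{t\ge0}$, conclude that $\int_0^\cdot a\,dW$ is a true martingale there (integrability being immediate from $a\in\mathbb L$ with $p=2$), and read off the identity from the martingale property---is the natural proof, and the paper offers nothing to compare it against: it only cites Lemma~2.1.4 of \cite{LEDThesis} and calls the modification trivial. You are also right that the lemma as literally stated is too weak: ``$W$ independent of $\Gcal$'' alone does not make $W$ an $(\Fcal_t\vee\Gcal)$-Brownian motion (one can cook up $\Gcal$ independent of $W$ such that $\Fcal_s\vee\Gcal$ reveals the sign of a future increment of $W$), so the hypothesis must be read through the product construction of Assumption~\ref{ass2}, exactly as you say.

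One step is misstated, though it is repairable. In your $\pi$-system check you invoke joint independence of the pair $(W_v-W_u,\bm1_A)$, $A\in\Fcal_u$, from $\Gcal$; in the paper's application this is false, because $\Gcal=\Fcal_T^V$ and $\Fcal_u\supset\Fcal_u^V$, so the conditioning $\sigma$-algebra $\Gcal$ and the full filtration $\Fcal_u$ genuinely overlap. What does hold, and what the argument actually needs, is that the \emph{increment} $W_v-W_u$ is independent of $\Fcal_u\vee\Gcal$: with $W=W^i$, $i\notin V$, one has $\Fcal_u\vee\Fcal_T^V\subset\Fcal_u^{\{i\}}\vee\bigvee_{j\ne i}\Fcal_T^{\{j\}}$, and mutual independence of the pairs $(X^j,W^j)$ plus the Markov/independent-increment property of $W^i$ gives $W^i_v-W^i_u\perp\Fcal_u^{\{i\}}\vee\bigvee_{j\ne i}\Fcal_T^{\{j\}}$. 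From that, $\E[(W_v-W_u)\bm1_A\bm1_G]=\E[W_v-W_u]\,\P(A\cap G)=0$ directly, without first conditioning on $\Gcal$ and without any independence between $\Fcal_u$ and $\Gcal$. The same correction applies to the check for $W_t^2-t$, and the remainder of your argument (L\'evy, progressive measurability in the enlarged filtration, martingale increment) then goes through as written.
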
 

Notice that these lemmas can be applied for $a$ and $b$ being either the processes $G^{ij}$, which belong to $\mathbb{L}$ by definition, or the iterated integrals $I_{{\bm i},{\bm j}}^{N}(s, \, t)$ for ${\bm i} = (\ir_1,\ldots,\ir_k) \in [N]^k$ and ${\bm j} = (\jr_1,\ldots,\jr_k) \in [N]^k$, which also belong to $\mathbb{L}$. (Recall that these iterated integrals are defined in \eqref{iterated_integral_I}.) The latter can be seen by applying Lemma~\ref{lpestimationiterated} with $M^{\ell}_t = \int_{0}^{t}G_t^{i_{\ell}j_{\ell}}dW_s^{i_{\ell}}$ for all $\ell \in [k]$ and then H\"older's inequality. 

Assume now that condition \ref{L_iterated_integrals_1} is satisfied. Let $\beta \in [n]$ be the largest index such that \eqref{L_iterated_integrals_eq_1} holds for some $\ell_0 \in [k_{\beta}]$, and then let $\ell_0$ be the smallest index for which this happens. Now define
\[
V = K \cup \{\jr_{1,1}, \ldots, \jr_{1,k_1-1} \} \cup \bigcup_{\alpha = 2}^n \{\jr_{\alpha,1},\ldots,\jr_{\alpha, k_\alpha} \}.
\]
Maximality of $\beta$ implies that $\ir_{\alpha, \ell} \in V$ for all $\alpha \ge \beta + 1$ and all $\ell \in [k_\alpha]$. Moreover, by definition of $V$ we have $j_{\alpha, \ell} \in V$ for all $\alpha \ge \beta + 1$ and all $\ell \in [k_\alpha]$. Thus every index appearing in $\ib_\alpha$ or $\jb_\alpha$ for $\alpha \ge \beta + 1$ belongs to $V$. As a result, $I_{{\bm i_{\alpha}},{\bm j_{\alpha}}}^{N}(T_{\alpha - 1} , T_{\alpha})$ is $\Fcal_T^V$-measurable for all $\alpha \ge \beta + 1$. Since $K \subset V$, $\Psi$ is also $\Fcal_T^V$-measurable. Finally, for $\alpha \le \beta - 1$ we have that $I_{{\bm i_{\alpha}},{\bm j_{\alpha}}}^{N}(T_{\alpha - 1} , T_{\alpha})$ is $\Fcal_{T_{\beta-1}}$-measurable. We conclude that
\begin{align*}
& \E\left[ \Psi \prod_{\alpha \in [n]}I_{{\bm i_{\alpha}},{\bm j_{\alpha}}}^{N}(T_{\alpha - 1} , T_{\alpha}) \right] \\
&\qquad = \E\left[ \Psi \prod_{\substack{\alpha \in [n] \\ \alpha \ne \beta}} I_{{\bm i_{\alpha}},{\bm j_{\alpha}}}^{N}(T_{\alpha - 1} , T_{\alpha}) \E\left[ I_{{\bm i_{\beta}},{\bm j_{\beta}}}^{N}(T_{\beta - 1} , T_{\beta}) \Mid \Fcal_{T_{\beta-1}} \vee \Fcal_T^V \right]  \right].
\end{align*}
It remains to show that
\begin{equation} \label{L_iterated_integrals_eq_2}
\E\left[ I_{{\bm i_{\beta}},{\bm j_{\beta}}}^{N}(T_{\beta - 1} , T_{\beta}) \Mid \Fcal_{T_{\beta-1}} \vee \Fcal_T^V \right] = 0,
\end{equation}
and this will rely on repeated application of Lemma~\ref{L_cond_exp_1}. Note that $j_{\beta, \ell} \in V$ for all $\ell \le \ell_0 - 1$ by definition of $V$. Moreover, minimality of $\ell_0$ implies that $\ir_{\beta, \ell} \in V$ for all $\ell \le \ell_0-1$. For $\ell$ in this range, starting with $\ell = 1$, we may therefore apply Lemma~\ref{L_cond_exp_1} iteratively with
\begin{align*}
\Gcal &= \Fcal_{T_{\beta-1}} \vee \Fcal_T^V, \\
W(t) &= W^{\ir_{\beta, \ell}}_{t}, \\
a(t) &= G^{\ir_{\beta, \ell} \jr_{\beta, \ell}}_{t}, \\
b(t) &= I_{{\bm i_{\beta}^{(\ell+1)}},{\bm j_{\beta}^{(\ell+1)}}}^{N}(T_{\beta - 1} , t),
\end{align*}
where $\ib_\beta^{(\ell)} = (\ir_{\beta, \ell}, \ldots, \ir_{\beta, k_{\beta}})$ and $\jb_\beta^{(\ell)} = (\jr_{\beta, \ell}, \ldots, \jr_{\beta, k_{\beta}}),$ to obtain
\begin{align*}
&\E\left[ I_{{\bm i_{\beta}},{\bm j_{\beta}}}^{N}(T_{\beta - 1} , T_{\beta}) \Mid \Fcal_{T_{\beta-1}} \vee \Fcal_T^V \right] \\
& \qquad = \E\left[ \int_{T_{\beta-1}}^{T_\beta} G^{\ir_{\beta, 1} \jr_{\beta, 1}}_{t_1} I_{{\bm i_{\beta}^{(2)}},{\bm j_{\beta}^{(2)}}}^{N}(T_{\beta - 1} , t_1) dW^{i_{\beta, 1}}_{t_1} \Mid \Fcal_{T_{\beta-1}} \vee \Fcal_T^V \right] \\
& \qquad = \int_{T_{\beta-1}}^{T_\beta} G^{\ir_{\beta, 1} \jr_{\beta, 1}}_{t_1} \E\left[ I_{{\bm i_{\beta}^{(2)}},{\bm j_{\beta}^{(2)}}}^{N}(T_{\beta - 1} , t_1) \Mid \Fcal_{T_{\beta-1}} \vee \Fcal_T^V \right]  dW^{i_{\beta, 1}}_{t_1} \\
& \qquad \ \vdots \\
& \qquad = \int_{T_{\beta-1}}^{T_{\beta}} \nG^{\ir_{\beta, 1} \jr_{\beta, 1}}_{t_1} \cdots \int_{T_{\beta-1}}^{t_{\ell_0 - 2}} \nG^{\ir_{\beta, \ell_0-1} \jr_{\beta, \ell_0-1}}_{t_{\ell_0-1}} \\
& \qquad \qquad \qquad \E\left[ I_{{\bm i_{\beta}^{(\ell_0)}},{\bm j_{\beta}^{(\ell_0)}}}^{N}(T_{\beta - 1} , t_{\ell_0 - 1}) \Mid \Fcal_{T_{\beta-1}} \vee \Fcal_T^V \right] dW^{\ir_{\beta, \ell_0-1}}_{t_{\ell_0-1}} \cdots dW^{\ir_{\beta, 1}}_{ t_{1}}.
\end{align*}
The right-hand side of the last is zero. Indeed, we have
\[
 I_{{\bm i_{\beta}^{(\ell_0)}},{\bm j_{\beta}^{(\ell_0)}}}^{N}(T_{\beta - 1} , t) = \int_{T_{\beta - 1}}^{t}  G^{i_{\beta, \ell_0}, j_{\beta, \ell_0}}_{t_{\ell_0}} I_{{\bm i_{\beta}^{(\ell_0 + 1)}},{\bm j_{\beta}^{(\ell_0 + 1)}}}^{N}(T_{\beta - 1} , t_{\ell_0}) dW^{\ir_{\beta, \ell_0}}_{t_{\ell_0}}, \quad t \ge T_{\beta - 1},
\]
where $W^{\ir_{\beta, \ell_0}}$ is independent of $\Fcal_T^V$ because $i_{\beta, \ell_0} \notin V$ due to \eqref{L_iterated_integrals_eq_1}. We therefore deduce from Lemma~\ref{L_cond_exp_2} that
\[
\E\left[ I_{{\bm i_{\beta}^{(\ell_0)}},{\bm j_{\beta}^{(\ell_0)}}}^{N}(T_{\beta - 1} , t) \Mid \Fcal_{T_{\beta-1}} \vee \Fcal_T^V \right] = 0, \quad t \ge T_{\beta - 1}.
\]
This yields \eqref{L_iterated_integrals_eq_2} as required.

Next, assume that condition \ref{L_iterated_integrals_2} is satisfied. In addition, we may assume that condition \ref{L_iterated_integrals_1} does \emph{not} hold since otherwise we would fall in the case just treated. We then define
\[
V = K \cup \{\jr_{1,1}, \ldots, \jr_{1,k_1-1} \} \cup \bigcup_{\alpha = 2}^n \{\jr_{\alpha,1},\ldots,\jr_{\alpha, k_\alpha} \}
\]
and observe that $\ir_{\alpha, \ell} \in V$ for all $\alpha \in [n]$ and all $\ell \in [k_\alpha]$ (since condition \ref{L_iterated_integrals_1} does not hold), and that $\jr_{\alpha, \ell} \in V$ for all $\alpha \in [n]$ and all $\ell \in [k_\alpha]$ except if $(\alpha, \ell) = (1, k_1)$ (by definition of $V$ and since \ref{L_iterated_integrals_2} holds). In particular, $I_{{\bm i_{\alpha}},{\bm j_{\alpha}}}^{N}(T_{\alpha - 1} , T_{\alpha})$ is $\Fcal_T^V$-measurable for all $\alpha \ge 2$, and as before $\Psi$ is $\Fcal_T^V$-measurable as well. Thus
\begin{align*}
& \E\left[ \Psi \prod_{\alpha \in [n]} I_{{\bm i_{\alpha}},{\bm j_{\alpha}}}^{N}(T_{\alpha - 1} , T_{\alpha}) \right] \\
&\qquad = \E\left[ \Psi \prod_{\alpha = 2}^n I_{{\bm i_{\alpha}},{\bm j_{\alpha}}}^{N}(T_{\alpha - 1} , T_{\alpha}) \E\left[ I_{{\bm i_1},{\bm j_1}}^{N}(T_{0} , T_{1}) \Mid \Fcal_T^V \right]  \right].
\end{align*}
The same iterative application of Lemma~\ref{L_cond_exp_1} as before, but now with $\Gcal = \Fcal_T^V$ and using that $W^{\ir_{1,\ell}}_t$, $t \le T$, is $\Fcal_T^V$-measurable for all $\ell \in [k_1]$ and that $\nG^{\ir_{1,\ell}\jr_{1,\ell}}_{t}$, $t \le T$, is $\Fcal_T^V$-measurable for all $\ell \in [k_1-1]$, leads to
\begin{align*}
&\E\left[ I_{{\bm i_1},{\bm j_1}}^{N}(T_{0} , T_{1}) \Mid \Fcal_T^V \right] \\
& \qquad = \int_{T_{0}}^{T_{1}} \nG^{\ir_{1, 1} \jr_{1, 1}}_{t_1} \cdots \int_{T_{0}}^{t_{k_1 - 1}} \E\left[ \nG^{\ir_{1, k_1} \jr_{1, k_1}}_{t_{k_1}} \Mid \Fcal_T^V \right] dW^{\ir_{1, k_1}}_{ t_{k_1} } \cdots dW^{\ir_{1, 1}}_{ t_{1} }.
\end{align*}
The conditional expectation on the right-hand side is equal to zero for all $t_{k_1} \le T$, thanks to \eqref{L_iterated_integrals_eq_0} and the fact that $i_{1, k_1} \in V$ and $j_{1, k_1} \notin V$. This completes the proof of the lemma.

\subsection{Proof of Lemma~\ref{count}}

First we pick the subset $K = \{i_{0,1}, \ldots, i_{0,\kappa}\} \subset [N]$, which can be done in exactly $\binom{N}{\kappa}$ ways. Next, we pick the coordinates of the vectors ${\bm j_{1}} = (\jr_{1, 1}, \ldots,\jr_{1, k_1}), \ldots, {\bm j_{n}} = (\jr_{n, 1}, \ldots,\jr_{n, k_n})$. There are $N$ possible choices for the first $k_1 - 1$ coordinates $\jr_{1, 1}, \ldots,\jr_{1, k_1-1}$ of ${\bm j_{1}}$, and also $N$ choices for all the $k_\alpha$ coordinates of ${\bm j_\alpha}$ for $\alpha \in \{2, \ldots, n\}$. Therefore, we can pick all these coordinates in $N^{k_1 - 1 + k_2 + \ldots + k_n} = N^{S - 1}$ ways. Then, the $k_1$-th coordinate $j_{1,k_1}$ of ${\bm j_1}$ needs to be taken equal to either one of the other $k_1 - 1 + k_2 + k_3 + \cdots + k_n = S - 1$ coordinates we have already picked or one of the $\kappa$ elements of $K = \{i_{0,1}, \ldots, i_{0,\kappa}\}$, as otherwise the second condition \ref{L_iterated_integrals_2} of Lemma~\ref{L_iterated_integrals} will be satisfied. The latter can be done in at most $S - 1 + \kappa$ ways. Hence, there are at most $\binom{N}{\kappa}(\kappa + S - 1)N^{S - 1}$ ways to pick the subset $K$ of $[N]$ and all the coordinates of ${\bm j_1}, \ldots, {\bm j_n}$. Finally, we pick the coordinates of the vectors ${\bm i_1},\ldots, {\bm i_n}$, where for each $\beta \in [n]$ and $\ell \in [k_\beta]$ we must take 
\[
\ir_{\beta, \ell} \in K \cup \{\jr_{\beta, 1}, \ldots, \jr_{\beta, \ell-1} \} \cup \bigcup_{\alpha = \beta + 1}^n \{\jr_{\alpha, 1},\ldots,\jr_{\alpha, k_\alpha} \},
\]
so that the first condition \ref{L_iterated_integrals_1} of Lemma~\ref{L_iterated_integrals} will fail to hold. This can be done in at most $u(\beta, \ell) = \kappa + \ell - 1 + k_{\beta+1} + k_{\beta+2} + \ldots + k_{n}$ ways. Observing that $u(\beta, \ell)$ takes every integer value between $u(n, 1) = \kappa$ and $u(1, k_1) = \kappa + S - 1$ exactly once, we see that the coordinates of ${\bm i_1}, \ldots, {\bm i_n}$ can be picked in at most $\kappa(\kappa + 1) \cdots (\kappa + S - 1)$ ways. Therefore, the number of ways in which the entire selection of the elements of $K$ and the coordinates of ${\bm i_1}, \ldots, {\bm i_n}$ and ${\bm j_1}, \ldots, {\bm j_n}$ can be done is at most
\[
\binom{N}{\kappa}(\kappa + S - 1)N^{S - 1} \kappa(\kappa + 1) \cdots (\kappa + S - 1) < \binom{N}{\kappa}\kappa(\kappa + 1)\cdots(\kappa + S)N^{S - 1}.
\]

\subsection{Proof of Lemma~\ref{tailest}}

For $m = 0$ the inequality holds trivially so we can assume that $m \geq 1$. Applying \eqref{sharpestiapplied} with $M = M^N$ yields
\begin{equation} \label{sharpestiapplied_N}
\left\Vert \int_{s}^t\int_{s}^{t_1} \cdots \int_{s}^{t_{m-1}}dM_{t_{m}}^{N} \cdots dM_{t_1}^{N} \right\Vert_{2p} \leq \frac{(2pm)^{m/2}3^m}{m!} \Vert \langle M^{N} \rangle_{s,t}^{1/2} \Vert_{2pm}^{m}. \\
\end{equation}
Next, H\"older's inequality and the fact that the distribution of $\Delta B^{i,N}$ is the same for all $i$ since our system is exchangeable give
\begin{equation} \label{holder1}
\begin{aligned}
\left\Vert \langle M^{N} \rangle_{s,t}^{1/2} \right\Vert_{2pm}^{m} &=  \mathbb{E} \left[\left(\sum_{i=1}^{N}\int_{s}^{t}(\Delta B_u^{i,N})^2du\right)^{pm}\right]^\frac{1}{2p}  \\
&\leq \mathbb{E} \left[(N(t-s))^{pm - 1}\sum_{i=1}^{N} \int_{s}^{t}(\Delta B_u^{i,N})^{2pm}du\right]^\frac{1}{2p}  \\
&= \left(N^{pm}(t - s)^{pm - 1}\int_{s}^{t}\mathbb{E}\left[ (\Delta B_u^{1,N} )^{2pm}\right]du\right)^\frac{1}{2p}.  \\
\end{aligned}
\end{equation}
Letting $C$ be the upper bound on the derivative of $r \mapsto B(t, \bm x_{[0,t]}, r)$ afforded by Assumption~\ref{ass2}, we obtain
\[
\left|\Delta B_u^{1,N}\right|
\leq C\left|\int g(u,X_{[0,u]}^1, \bm{y}) (\mu_u^{N} - \mu_u)(d\bm{y}) \right|.
\]
Using this in \eqref{holder1} yields
\[
\left\Vert \langle M^{N} \rangle_{s, t}^{1/2} \right\Vert_{2pm}^{m}
\leq \Bigg(N^{pm}(t - s)^{pm - 1}C^{2pm}\int_{s}^{t} \mathbb{E}\Bigg[ \left|\int g(u,X_{[0,u]}^1, \bm{y}) (\mu_u^{N} - \mu_u)(d\bm{y}) \right|^{2pm}\Bigg]du \Bigg)^\frac{1}{2p}.
\]
We may now use Assumption~\ref{ass2} to deduce that for any integer $p > 1$,
\begin{align*}
\left\Vert \langle M^{N} \rangle_{s, t}^{1/2} \right\Vert_{2pm}^{m} &\leq \left(N^{pm}(t - s)^{pm - 1}C^{2pm}\int_{s}^{t}\frac{1}{N^{pm}} \left(pm\right)!  K(u)^{pm} du \right)^\frac{1}{2p} \\
&\le (t - s)^{\frac{m}{2}}C^{m}\left(\left(pm\right)!\right)^{\frac{1}{2p}}\sup_{0 \leq u \leq T}K(u)^{m/2}.
\end{align*}
Plugging this into \eqref{sharpestiapplied_N} we obtain
\begin{align}\label{bbb}
&\left\Vert \int_{s}^t\int_{s}^{t_1} \cdots \int_{s}^{t_{m-1}}dM_{t_{m}}^{N} \cdots dM_{t_1}^{N} \right\Vert_{2p} \nonumber \\
&\qquad \leq (3C)^{m}\frac{((pm)!)^{1/2p}}{m!}(2pm)^{\frac{m}{2}}(t - s)^{\frac{m}{2}}\sup_{0 \leq u \leq T}K(u)^{m/2}.
\end{align}
A calculation using Stirling's approximation yields $((pm)!)^{1/2p} / m! < (2pm )^{-m/2}(8e^2p)^{m}$, and substituting this into \eqref{bbb} finally leads to
\[
\Bigg\Vert \int_{s}^t\int_{s}^{t^1} \cdots \int_{s}^{t^{m-1}}dM_{t^{m}}^{N}\cdots dM_{t^1}^{N}  \Bigg\Vert_{2p} \le (3C)^{m}(t - s)^{\frac{m}{2}}(8e^2p)^{m} \sup_{0 \leq u \leq T}K(u)^{m/2}.
\]
This shows that the desired result holds with $C(T) = 24Ce^2\sqrt{\sup_{0 \leq u \leq T}K(u)}$.

\subsection{Proof of Lemma~\ref{combinatoryinequality}}

We recall the identity
\begin{equation} \label{intermediatelemma4.6_0}
x \frac{d^{S+1}}{dx^{S+1}}\sum_{\kappa=0}^N\binom{N}{\kappa}x^{\kappa+S} = \sum_{\kappa=1}^{N}\binom{N}{\kappa}\kappa(\kappa+1) \cdots (\kappa+S-1)(\kappa + S)x^{\kappa}.
\end{equation}
By the binomial theorem and Leibniz's rule for the derivative of a product of functions, we have the estimate
\begin{align}\label{intermediatelemma4.6}
x \frac{d^{S+1}}{dx^{S+1}}\sum_{\kappa=0}^N\binom{N}{\kappa}x^{\kappa+S} &= x \frac{d^{S+1}}{dx^{S+1}} \Bigg(x^S(1+x)^N\Bigg)\nonumber\\
&=x \sum_{i=0}^{S+1}\binom{S+1}{i}S(S-1)\cdots(S-i+1)x^{S-i}\nonumber\\
&\qquad \qquad \times N(N-1)\cdots(N-S+i)(1+x)^{N-S-1+i}\nonumber\\
&\leq (S+1)^{2(S+1)}(1+x)^N\sum_{i=0}^{S+1}(xN)^{S-i+1}.
\end{align}
Combining \eqref{intermediatelemma4.6_0} and \eqref{intermediatelemma4.6}, plugging in $x = (C/N)^{1-1/\log N}$, noting that this value of $x$ is upper bounded by $Ce / N$ since $C>1$, and finally using that $1 + Ce/N \leq \exp(Ce/N)$ and that $\left(eC\right)^{S - i + 1} < \left(eC\right)^{S + 1}$ since $eC > e > 1$, we obtain the desired inequality.

\section{Proof of Theorem~\ref{T_main_precise}} \label{S_main_proof}

We now prove Theorem~\ref{T_main_precise}. The setup of Section~\ref{S_results} will be used. In particular the objects $\mu^N$, $M^N$, $\Delta B^{i,N}$ in \eqref{eq_muN}--\eqref{eq_DeltaBN} as well as the density process $Z^N = \exp(M^N - \frac12 \langle M^N \rangle)$ will be referred to freely. Assumptions~\ref{ass1} and \ref{ass2} are in force. The induced measure $\Q^N$, the normalizing constants $a^N_T,b^N_T$, and the limiting distribution function $\Gamma_T$ are as in the statement of the theorem. The time point $T$ is fixed throughout.

We must prove \eqref{eq_T_main_precise_1}. It suffices to do this for $x \in \R$ such that $\Gamma_T(x) > 0$. Indeed, suppose this has been done and consider $x$ such that $\Gamma_T(x) = 0$. Because all extreme value distributions are continuous, for any $\varepsilon > 0$ there is $x' > x$ such that $0 < \Gamma_T(x') < \varepsilon$, and thus
\[
\Q^N\left( \max_{i \le N} \frac{X^i_T - b^N_T}{a^N_T} \le x \right) \le \Q^N\left( \max_{i \le N} \frac{X^i_T - b^N_T}{a^N_T} \le x' \right) \to \Gamma_T(x') < \varepsilon.
\]
Since $\varepsilon > 0$ was arbitrary, the left-hand side converges to $\Gamma_T(x) = 0$ as $N \to \infty$. We thus pick $x$ such that $\Gamma_T(x) > 0$ and set out to prove that as $N \to \infty$,
\[
\Q^N\left( \max_{i \le N} \frac{X^i_T - b^N_T}{a^N_T} \le x \right) - \P\left( \max_{i \le N} \frac{X^i_T - b^N_T}{a^N_T} \le x \right) = \E\left[ \prod_{i=1}^N \bm1_{\{X^i_T \le x_N\}} (Z^N_T - 1) \right] \to 0,
\]
where for brevity we introduce the notation
\[
x_N = a^N_T x + b^N_T.
\]
The proof is divided into several steps.

\paragraph{Step 1: partitioning the time interval.}

Chaos expansions of $Z^N$ are at the core of the proof, and to get sufficient control on the convergence of these expansions we partition the interval $(0,T]$ into $n$ subintervals $(T_{\alpha-1},T_\alpha]$, $\alpha \in [n]$, of equal length $T_\alpha - T_{\alpha-1} = T/n$. We choose $n$ large enough that $C(T) \sqrt{T/n} < 1/2$, where $C(T)$ is the constant in Lemma~\ref{tailest}, and then keep $n$ fixed for the remained of the proof. We now observe the identity
\[
Z^N_T - 1 = \sum_{\alpha=1}^n \prod_{\beta=1}^\alpha \left( \frac{Z^N_{T_\beta}}{Z^N_{T_{\beta-1}}} - \delta_{\alpha \beta} \right),
\]
where $\delta_{\alpha \beta}$ is the Kronecker delta, thus $\delta_{\alpha \beta}=1$ if $\alpha = \beta$ and $\delta_{\alpha \beta}=0$ otherwise. This yields
\[
\E\left[ \prod_{i=1}^N \bm1_{\{X^i_T \le x_N\}} (Z^N_T - 1) \right] = \sum_{\alpha = 1}^n A^N_\alpha,
\]
where
\[
A^N_\alpha = \E \left[ \prod_{i=1}^N \bm1_{\{X^i_T \le x_N\}} \prod_{\beta=1}^\alpha \left( \frac{Z^N_{T_\beta}}{Z^N_{T_{\beta-1}}} - \delta_{\alpha \beta} \right) \right].
\]
To prove the theorem it suffices to show that $A^N_\alpha \to 0$ as $N \to \infty$ for each $\alpha \in [n]$. We thus fix any such $\alpha$ and set out to prove that $A^N_\alpha \to 0$.

\paragraph{Step 2: controlling the tails of the chaos expansions uniformly in $N$.}

Let $\varepsilon > 0$ be arbitrary. We will show by induction that there are positive integers $m_1, m_2, \ldots, m_{\alpha}$, which do not depend on $N$, such that for $\gamma = 1,\ldots, \alpha + 1$ we have
\begin{equation} \label{chaostailcontrol}
\begin{aligned}
|A^N_{\alpha}| &\leq (\gamma - 1)\varepsilon \\
&\quad + \left|\mathbb{E}\left[\prod_{i=1}^N \bm1_{\{X_{T}^{i} \le x_N\}}\prod_{\beta = 1}^{\gamma - 1}\left(\sum_{m = \delta_{\alpha\beta}}^{m_{\beta}}I_m^N(T_{\beta - 1}, T_\beta)\right) \prod_{\beta = \gamma}^\alpha\left(\frac{Z_{T_\beta}^N}{Z_{T_{\beta - 1}}^N}-\delta_{\alpha\beta}\right)\right]\right|,
\end{aligned}
\end{equation}
with the convention that an empty product is equal to one. The base case $\gamma = 1$ holds trivially because the right-hand side is then just equal to $|A^N_\alpha|$. Suppose now that for some $\gamma \in [\alpha]$ we have determined positive integers $m_1,\ldots,m_{\gamma-1}$ such that \eqref{chaostailcontrol} holds. We will find $m_\gamma$ such that \eqref{chaostailcontrol} is true with $\gamma$ replaced by $\gamma+1$.

To this end, decompose the chaos expansion of $Z^N_{T_\gamma} / Z^N_{T_{\gamma-1}}$ as
\[
\frac{Z_{T_\gamma}^N}{Z_{T_{\gamma - 1}}^N}-\delta_{\alpha\gamma} = \sum_{m=\delta_{\alpha\gamma}}^{m_{\gamma}} I_m^N(T_{\gamma - 1}, T_\gamma) + 
\sum_{m=m_{\gamma} + 1}^{\infty} I_m^N(T_{\gamma - 1}, T_\gamma).
\]
As will become clear shortly, the infinite series converges in $L^2$ thanks to Lemma~\ref{tailest} and the fact that $T_{\gamma}-T_{\gamma-1} = T/n$ is sufficiently small. Plugging this into the induction hypothesis \eqref{chaostailcontrol} we get
\begin{equation} \label{inductionstep}
\begin{aligned}
|A^N_{\alpha}| &\leq (\gamma - 1)\varepsilon \\
&\quad + \Bigg|\mathbb{E}\Bigg[\prod_{i=1}^N \bm1_{\{X_{T}^{i} \le x_N\}}\prod_{\beta = 1}^{\gamma}\Bigg(\sum_{m = \delta_{\alpha\beta}}^{m_{\beta}}I_m^N(T_{\beta - 1}, T_\beta)\Bigg) \prod_{\beta = \gamma+1}^\alpha\Bigg(\frac{Z_{T_\beta}^N}{Z_{T_{\beta - 1}}^N}-\delta_{\alpha\beta}\Bigg)\Bigg]\Bigg| \\
&\quad + \Bigg| \mathbb{E}\Bigg[ \prod_{i=1}^N \bm1_{\{X_{T}^{i} \le x_N\}}\prod_{\beta = 1}^{\gamma - 1}\Bigg(\sum_{m = \delta_{\alpha\beta}}^{m_{\beta}}I_m^N(T_{\beta - 1}, T_\beta)\Bigg) \\
&\qquad\qquad\qquad \times \Bigg(\sum_{m = m_\gamma + 1}^{\infty}I_m^N(T_{\gamma - 1}, T_\gamma)\Bigg) \prod_{\beta = \gamma+1}^\alpha\Bigg(\frac{Z_{T_\beta}^N}{Z_{T_{\beta - 1}}^N}-\delta_{\alpha\beta}\Bigg) \Bigg] \Bigg| .
\end{aligned}
\end{equation}
The third term on the right-hand side of \eqref{inductionstep} is bounded by
\[
\mathbb{E}\Bigg[  \prod_{\beta = 1}^{\gamma - 1} \Bigg| \sum_{m = \delta_{\alpha\beta}}^{m_{\beta}}I_m^N(T_{\beta - 1}, T_\beta) \Bigg| \sum_{m = m_\gamma + 1}^{\infty} \left| I_m^N(T_{\gamma - 1}, T_\gamma) \right|  \E\Bigg[ \Bigg| \prod_{\beta = \gamma+1}^\alpha \Bigg( \frac{Z_{T_\beta}^N}{Z_{T_{\beta - 1}}^N}-\delta_{\alpha\beta} \Bigg) \Bigg| \Mid \Fcal_{T_\gamma} \Bigg] \Bigg].
\]
Note that $| \prod_{\beta = \gamma+1}^\alpha ( Z_{T_\beta}^N / Z_{T_{\beta - 1}}^N - \delta_{\alpha\beta})|$ is bounded by $(Z^N_{T_\alpha} + Z^N_{T_{\alpha-1}}) / Z^N_{T_\gamma}$ if $\gamma \le \alpha-1$, and by one if $\gamma=\alpha$. The martingale property of $Z^N$ thus implies that the conditional expectation above is bounded by two. Using also H\"older's inequality, the triangle inequality, and finally Lemma~\ref{tailest}, we bound the expression in the preceding display by
\begin{align*}
&2\, \mathbb{E}\Bigg[  \prod_{\beta = 1}^{\gamma - 1} \Bigg| \sum_{m = \delta_{\alpha\beta}}^{m_{\beta}}I_m^N(T_{\beta - 1}, T_\beta) \Bigg| \sum_{m = m_\gamma + 1}^{\infty} \left| I_m^N(T_{\gamma - 1}, T_\gamma) \right| \Bigg] \\
&\le 2 \prod_{\beta = 1}^{\gamma - 1} \Bigg\| \sum_{m = \delta_{\alpha\beta}}^{m_{\beta}}I_m^N(T_{\beta - 1}, T_\beta) \Bigg\|_{2\gamma-2} \Bigg\| \sum_{m = m_\gamma + 1}^{\infty} \left| I_m^N(T_{\gamma - 1}, T_\gamma) \right| \Bigg\|_2 \\
&\le 2 \prod_{\beta = 1}^{\gamma - 1} \sum_{m = \delta_{\alpha\beta}}^{m_{\beta}} \left\| I_m^N(T_{\beta - 1}, T_\beta) \right\|_{2\gamma-2}  \sum_{m = m_\gamma + 1}^{\infty} \left\| I_m^N(T_{\gamma - 1}, T_\gamma) \right\|_2 \\
&\le 2 \prod_{\beta = 1}^{\gamma - 1} \sum_{m = \delta_{\alpha\beta}}^{m_{\beta}} \left( C(T) (\gamma-1) \sqrt{T/n} \right)^m \sum_{m = m_\gamma + 1}^{\infty} \left( C(T) \sqrt{T/n} \right)^m.
\end{align*}
Thanks to the choice of $n$ in Step~1, the right-hand side is bounded by
\[
2 \prod_{\beta = 1}^{\gamma - 1} \sum_{m = \delta_{\alpha\beta}}^{m_{\beta}} \left( \frac{\gamma-1}{2} \right)^m \sum_{m = m_\gamma + 1}^{\infty} 2^{-m}.
\]
We now simply choose $m_\gamma$ large enough that this expression is less than $\varepsilon$. Plugging this back into \eqref{inductionstep} yields \eqref{chaostailcontrol} with $\gamma$ replaced by $\gamma+1$. This completes the induction step and shows that \eqref{chaostailcontrol} holds for all $\gamma=1,\ldots,\alpha+1$. In particular, taking $\gamma=\alpha+1$ we obtain
\begin{equation} \label{step1result}
|A^N_{\alpha}| \leq \alpha \varepsilon + \left|\mathbb{E}\left[\prod_{i=1}^N \bm1_{\{X_{T}^{i} \le x_N\}}\prod_{\beta = 1}^{\alpha} \sum_{m = \delta_{\alpha\beta}}^{m_{\beta}}I_m^N(T_{\beta - 1}, T_\beta) \right]\right|.
\end{equation}

\paragraph{Step 3: reduction to linear drift.}

We now linearize the drift function $B(t, \bm x_{[0,t]}, r)$ with respect to its third argument. We write $D_3 B(t, \bm x_{[0,t]}, r)$ for the derivative with respect to $r$ and define for simplicity the process
\[
D_3 B^i_t = D_3 B\left(t, X^i_{[0,t]}, \int g(t, X^i_{[0,t]}, \bm y_{[0,t]}) \mu_t(d\bm y) \right).
\]
Note that $D_3B^i$ is adapted to the filtration $(\Fcal^{\{i\}}_t)_{t \ge 0}$ generated by $(X^i,W^i)$. We also write $H^i_t(\mu) = \int g(t, X^i_{[0,t]}, \bm y_{[0,t]}) \mu(d\bm y)$ for any signed measure $\mu$ on $C(\R_+)$. We then have the Taylor formula
\[
\Delta B^{i,N}_t = D_3 B^i_t \, H^i_t(\mu^N_t - \mu_t) + R^{i,N}_t \left( H^i_t(\mu^N_t - \mu_t) \right)^2,
\]
where $R^{i,N}$ is a process which is uniformly bounded in terms of the bound on the second derivative of $r \mapsto B(t, \bm x_{[0,t]}, r)$ given by Assumption~\ref{ass1}. We now define local martingales
\begin{equation}\label{tildem}
    \widetilde{M}_t^N = \sum_{i=1}^N \int_0^t D_3B_s^i \, H_s^i(\mu_s^N - \mu_s) dW_s^i
\end{equation}
and iterated integrals
\begin{equation}\label{tildei}
    \widetilde{I}_m^N(s,t) = \int_s^t\int_s^{t_1}\cdots\int_s^{t_{m-1}} d\tilde{M}^N_{t_m}\cdots d\tilde{M}^N_{t_1}.
\end{equation}

We will prove that there exists a constant $C$, which does not depend on $N$, such that
\begin{equation}\label{step2claim}
\mathbb{E}\left[ \left| \prod_{\beta = 1}^{\alpha} \sum_{m = \delta_{\alpha\beta}}^{m_{\beta}}I_m^N(T_{\beta - 1}, T_\beta) - \prod_{\beta = 1}^{\alpha} \sum_{m = \delta_{\alpha\beta}}^{m_{\beta}} \widetilde I_m^N(T_{\beta - 1}, T_\beta) \right| \right] \le \frac{C}{\sqrt{N}}.
\end{equation}
To prove this, we expand the products and use the triangle inequality to bound the left hand side by
\[
\sum_{k_1,\ldots,k_\alpha} \mathbb{E} \left[ \left| I_{k_1}^N(T_{0},T_1)\cdots I_{k_\alpha}^N(T_{\alpha - 1},T_\alpha) - \widetilde{I}_{k_1}^N(T_{0},T_1)\cdots \widetilde{I}_{k_\alpha}^N(T_{\alpha - 1},T_\alpha) \right| \right],
\]
where the sum ranges over all $(k_1,\ldots, k_\alpha)$ such that $k_\beta \in [m_{\beta}]\cup \{0\}$ for $\beta < \alpha$ and $k_{\alpha}\in[m_\alpha]$. On each summand in the above expression we apply the identity
\[
\prod_{\beta=1}^\alpha x_\beta - \prod_{\beta=1}^\alpha y_\beta = \sum_{(i_1,\ldots ,i_\alpha)\in\{0,1\}^\alpha \setminus \{\mathbf{0}\}}\prod_{\beta = 1}^\alpha (x_\beta - y_\beta)^{i_\beta}y_\beta^{1-i_\beta}
\]
and then use the triangle inequality along with H\"older's inequality to get
\begin{align} \label{step2eq1}
&\mathbb{E} \left[ \left| I_{k_1}^N(T_{0},T_1)\cdots I_{k_\alpha}^N(T_{\alpha - 1},T_\alpha) - \widetilde{I}_{k_1}^N(T_{0},T_1)\cdots \widetilde{I}_{k_\alpha}^N(T_{\alpha - 1},T_\alpha) \right| \right] \nonumber \\
&\leq
 \sum_{(i_1,\ldots,i_\alpha)\in\{0,1\}^\alpha \setminus \{\mathbf{0}\}} \mathbb{E}\left[ \left| \prod_{\beta = 1}^\alpha \left( I_{k_\beta}^N(T_{\beta - 1}, T_\beta) - \widetilde{I}_{k_\beta}^N(T_{\beta - 1}, T_\beta)\right)^{i_\beta}\widetilde{I}_{k_\beta}^N(T_{\beta - 1}, T_\beta)^{1-i_\beta}\right| \right] \nonumber \\
&\leq
 \sum_{(i_1,\ldots,i_\alpha)\in\{0,1\}^\alpha \setminus \{\mathbf{0}\}}\prod_{\beta = 1}^\alpha \left\| \left(I_{k_\beta}^N(T_{\beta - 1}, T_\beta) - \widetilde{I}_{k_\beta}^N(T_{\beta - 1}, T_\beta) \right)^{i_\beta}\widetilde{I}_{k_\beta}^N(T_{\beta - 1}, T_\beta)^{1-i_\beta} \right\|_{\alpha}.
\end{align}
Thus it suffices to bound each of the products in \eqref{step2eq1} by a constant times $1/\sqrt{N}$. Since each of these products has at least one factor with $i_\beta = 1$, this will follow directly from the estimates
\begin{equation}\label{step2claima}
\|\widetilde{I}_{k_\beta}^N(T_{\beta - 1}, T_\beta)\|_{\alpha} \leq C
\end{equation}
and
\begin{equation}\label{step2claimb}
\|I_{k_\beta}^N(T_{\beta - 1}, T_\beta) - \widetilde{I}_{k_\beta}^N(T_{\beta - 1}, T_\beta)\|_{\alpha} \leq \frac{C}{\sqrt{N}}.
\end{equation}
Once these estimates have been proved, \eqref{step2claim} follows.

To prove \eqref{step2claima}--\eqref{step2claimb} we first derive $L^p$ estimates for the quadratic variations of $\widetilde M^N$ and $M^N - \widetilde M^N$. Let $C$ be a uniform bound on the first and second derivatives of $r \mapsto B(t, \bm x_{[0,t]}, r)$ as given by Assumption~\ref{ass1}, and recall that Assumption~\ref{ass2} gives
\[
\mathbb{E}\left[ \left(H_t^i(\mu_t^N - \mu_t) \right)^{2p} \right] \leq \frac{1}{N^p}p!K(t)^p.
\]
for any positive integer $p$ and $t \in \R_+$. Therefore using H\"older's inequality we obtain, for any positive integer $p$,
\begin{equation} \label{quadratic1}
\begin{aligned}
    \left\| \langle \widetilde{M}^N \rangle_{s,t}^{1/2} \right\|_{2p} &\leq C\, \mathbb{E}\left[\left(\sum_{i=1}^N\int_s^t \left( H_u^i(\mu_u^N - \mu_u) \right)^2  du \right)^{p}\right]^{\frac{1}{2p}} \\
    &\leq C\, (N(t - s))^{\frac{1}{2} - \frac{1}{2p}} \left(\sum_{i=1}^N\int_s^t \mathbb{E}\left[ \left( H_u^i(\mu_u^N - \mu_u) \right)^{2p} \right] du\right)^{\frac{1}{2p}} \\
    &\le C\, (t-s)^{1/2} (p!)^{1/(2p)} \sup_{s \le u \le t} K(u)^{1/2}
\end{aligned}
\end{equation}
and
\begin{equation} \label{quadratic2}
\begin{aligned}
    \left\| \langle M^N - \widetilde{M}^N \rangle_{s,t}^{1/2} \right\|_{2p}
    &\leq C\, \mathbb{E}\left[\left(\sum_{i=1}^N\int_s^t \left( H_u^i(\mu_u^N - \mu_u) \right)^4 du \right)^{p}\right]^{\frac{1}{2p}} \\
    &\leq C\, (N(t - s))^{\frac{1}{2} - \frac{1}{2p}} \left(\sum_{i=1}^N \int_s^t \mathbb{E} \left[ \left(H_u^i(\mu_u^N - \mu_u)\right)^{4p} \right] du \right)^{\frac{1}{2p}} \\
    &\le C\, (t-s)^{1/2} ((2p)!)^{1/(2p)} \sup_{s \le u \le t} K(u) \frac{1}{\sqrt{N}}.
\end{aligned}
\end{equation}
To prove \eqref{step2claima} we apply Lemma~\ref{lpestimationiterated} to the iterated integral $\widetilde{I}_{k_\beta}^N(T_{\beta - 1}, T_\beta)$ and combine this with \eqref{quadratic1} to get
\[
\|\widetilde{I}_{k_\beta}^N(T_{\beta - 1}, T_\beta)\|_{\alpha} \leq C \prod_{\ell=1}^{k_\beta} \left\| \langle \widetilde{M}^N \rangle_{T_{\beta-1},T_{\beta}}^{1/2} \right\|_{2^{\ell}\alpha} \le C.
\]
To prove \eqref{step2claimb} we observe that $I_{k_\beta}^N(T_{\beta - 1}, T_\beta) - \widetilde{I}_{k_\beta}^N(T_{\beta - 1}, T_\beta)$ can be written as the sum of $2^{k_\beta} - 1$ terms, each having the form
\[
\int_{T_{\beta-1}}^{T_\beta}\int_{T_{\beta-1}}^{t_1}\cdots\int_{T_{\beta-1}}^{t_{k_\beta-1}} d{Y}^{k_\beta}_{t_{k_\beta}}\cdots d{Y}^1_{t_1},
\]
where $Y^{\ell} = M^N - \widetilde{M}^N$ for at least one $\ell$ and $Y^{\ell} = \widetilde{M}^N$ for the remaining $\ell$. By first applying Lemma \ref{lpestimationiterated} and then \eqref{quadratic1} and \eqref{quadratic2} we get
\[
\left\| \int_{T_{\beta-1}}^{T_\beta}\int_{T_{\beta-1}}^{t_1}\cdots\int_{T_{\beta-1}}^{t_{k_\beta-1}} d{Y}^{k_\beta}_{t_{k_\beta}}\cdots d{Y}^1_{t_1} \right\|_{\alpha} \leq C \prod_{\ell=1}^{k_\beta}\left\| \langle Y^\ell \rangle_{T_{\beta-1}, T_\beta}^{1/2}\right\|_{2^{\ell}\alpha} \le \frac{C}{\sqrt{N}}.
\]
(Here we used \eqref{quadratic1} for each $Y^{\ell}$ that equals $\tilde{M}^N$ and \eqref{quadratic2} for each $Y^{\ell}$ that equals $M^N - \tilde{M}^N$, and the $1/\sqrt{N}$ factor emerged because there is at least one factor of the latter kind.) By summing and using the triangle inequality we finally obtain \eqref{step2claimb}.

To summarize, we have now proved \eqref{step2claima}--\eqref{step2claimb}, thus showing that each of the products in \eqref{step2eq1} is bounded by a constant times $1/\sqrt{N}$. This in turn yields \eqref{step2claim} as desired.

We end Step~3 by combing \eqref{step2claim} and \eqref{step1result} to get
\begin{equation}\label{AnalyticStep2Claim}
|A^N_{\alpha}| \leq \alpha \varepsilon + \frac{C}{\sqrt{N}} + \left|\mathbb{E}\left[\prod_{i=1}^N \bm1_{\{X_{T}^{i} \le x_N\}}\prod_{\beta = 1}^{\alpha} \sum_{m = \delta_{\alpha\beta}}^{m_{\beta}} \widetilde I_m^N(T_{\beta - 1}, T_\beta) \right]\right|.
\end{equation}
The key point is that the iterated integrals $\widetilde I^N_m(T_{\beta-1},T_\beta)$ are defined in terms of the local martingale $\widetilde M^N$ in \eqref{tildem} which, unlike $M^N$, depends \emph{linearly} on $\mu^N-\mu$. In sense, all nonlinear dependence on $\mu^N-\mu$ has been absorbed into the vanishing term $C/\sqrt{N}$.

\paragraph{Step 4: expanding the iterated integrals.}
Our starting point is now \eqref{AnalyticStep2Claim}, where we recall that $\alpha$ is fixed, $\varepsilon>0$ is arbitrary, and $m_1,\ldots,m_\alpha, C$ do not depend on $N$. Therefore, to show that $A^N_\alpha \to 0$ as $N \to \infty$, it is enough to show that the expectation in \eqref{AnalyticStep2Claim} tends to zero as $N \to \infty$. We now pave the way by expanding the sums and products in \eqref{AnalyticStep2Claim} to bring us into a position where the results of Section~\ref{sect4} can be applied.

We first expand the product indexed by $\beta$ to bound the expectation in \eqref{AnalyticStep2Claim} by
\begin{equation}\label{step2result}
\sum_{k_1,\ldots,k_\alpha} \left|\mathbb{E}\left[\prod_{i=1}^N \bm1_{\{X_{T}^{i} \le x_N\}} \widetilde{I}_{k_1}^N(T_{0},T_1)\cdots \widetilde{I}_{k_\alpha}^N(T_{\alpha - 1},T_\alpha) \right]\right|,
\end{equation}
where the sum ranges over all $(k_1,\ldots, k_\alpha)$ such that $k_\beta \in [m_{\beta}]\cup \{0\}$ for $\beta < \alpha$ and $k_{\alpha}\in[m_\alpha]$. It suffices to show that each summand in \eqref{step2result} vanishes as $N \to \infty$, so we fix a tuple $(k_1,\ldots,k_\alpha)$ and focus on the corresponding expectation.

The next step is to insert the identity $\bm1_{\{X_{T}^{i} \le x_N\}} = 1 - \bm1_{\{X_{T}^{i} > x_N\}}$ and expand the product to write the expectation as
\begin{equation}\label{step3expansion}
\begin{aligned}
&\mathbb{E}\left[\prod_{i=1}^N \bm1_{\{X_{T}^{i} \le x_N\}} \widetilde{I}_{k_1}^N(T_{0},T_1)\cdots \widetilde{I}_{k_\alpha}^N(T_{\alpha - 1},T_\alpha) \right] \\
&\quad =\sum_{\kappa=1}^N (-1)^{\kappa}\sum_{\{i_{01},\ldots,i_{0\kappa}\} \subset [N]}\mathbb{E} \left[ \prod_{\ell=1}^{\kappa} \bm 1_{\{X_{T}^{i_{0\ell}} > x_N\}} \widetilde{I}_{k_1}^N(T_{0},T_1)\cdots \widetilde{I}_{k_\alpha}^N(T_{\alpha - 1},T_\alpha) \right].
\end{aligned}
\end{equation}
The purpose of the substitution $\bm1_{\{X_{T}^{i} \le x_N\}} = 1 - \bm1_{\{X_{T}^{i} > x_N\}}$ is to allow us to use the fact that $\{X_{T}^{i} > x_N\}$ are independent events whose probabilities are of order $1/N$.

We proceed to expand the iterated integrals $\widetilde{I}_{k_{\beta}}^N(T_{\beta - 1}, T_{\beta})$. In view of \eqref{tildem} and the definition of $H_s^i(\mu^N_s - \mu_s)$ and $\mu^N_s$ we have
\begin{align}\label{exhaustiveexpansion}
\widetilde{M}_t^N = \frac{1}{N}\sum_{i=1}^N\sum_{j=1}^N\int_0^tG_s^{ij} dW_s^i,
\end{align}
where
\begin{equation}\label{G}
    G_t^{ij} = D_3B_t^i\, \left(g\left(t,X_{[0,t]}^i,X_{[0,t]}^j\right) - \int g\left(t,X_{[0,t]}^i,\bm y_{[0,t]}\right) \mu_t(d{\bm y}) \right)
\end{equation}
for all $i, j \in [N]$. Plugging \eqref{exhaustiveexpansion} and \eqref{G} into the definition of $\widetilde{I}_{k_{\beta}}^N(T_{\beta - 1}, T_{\beta})$, see \eqref{tildei}, gives
\begin{equation}
    \widetilde{I}^N_{k_\beta}(T_{\beta - 1}, T_\beta) = \frac{1}{N^{k_\beta}}\sum_{{\bm i}\in[N]^{k_\beta}}\sum_{{\bm j}\in[N]^{k_\beta}}I_{{\bm i},{\bm j}}^{N}(T_{\beta - 1}, T_\beta),
\end{equation}
where we use the iterated integral notation \eqref{iterated_integral_I} of Section~\ref{sect4}. The product of iterated integrals appearing in \eqref{step3expansion} can then be written
\[
\widetilde{I}_{k_1}^N(T_{0},T_1)\cdots \widetilde{I}_{k_\alpha}^N(T_{\alpha - 1},T_\alpha)
= \frac{1}{N^{k_1 + \cdots + k_\alpha}}\sum_{({\bm i}_1,{\bm j}_1),\ldots,({\bm i}_\alpha,{\bm j}_\alpha)} \prod_{\beta = 1}^{\alpha} I_{{\bm i_{\beta}},{\bm j_{\beta}}}^{N}(T_{\beta - 1}, T_\beta)
\]
where the sum extends over all $\alpha$-tuples $(({\bm i}_1,{\bm j}_1),\ldots,({\bm i}_\alpha,{\bm j}_\alpha))$ consisting of pairs $({\bm i}_{\beta},{\bm j}_{\beta})$ in $[N]^{k_\beta} \times [N]^{k_\beta}$. Substituting this representation turns the right-hand side of \eqref{step3expansion} into
\[
\frac{1}{N^{k_1 + \cdots + k_\alpha}} \sum_{\kappa=1}^N (-1)^{\kappa}\sum_{\{i_{01},\ldots,i_{0\kappa}\} \subset [N]} \sum_{({\bm i}_1,{\bm j}_1),\ldots,({\bm i}_\alpha,{\bm j}_\alpha)} \mathbb{E} \left[ \prod_{\ell=1}^{\kappa} \bm1_{\{X_{T}^{i_{0\ell}} > x_N\}} \prod_{\beta = 1}^{\alpha} I_{{\bm i_{\beta}},{\bm j_{\beta}}}^{N}(T_{\beta - 1}, T_\beta) \right],
\]
whose absolute value is bounded by
\begin{equation}\label{step3result}
\frac{1}{N^{k_1 + \cdots + k_\alpha}} \sum_{\kappa=1}^N  \sum_{\{i_{01},\ldots,i_{0\kappa}\} \subset [N]} \sum_{({\bm i}_1,{\bm j}_1),\ldots,({\bm i}_\alpha,{\bm j}_\alpha)} \left| \mathbb{E} \left[ \prod_{\ell=1}^{\kappa} \bm1_{\{X_{T}^{i_{0\ell}} > x_N\}} \prod_{\beta = 1}^{\alpha} I_{{\bm i_{\beta}},{\bm j_{\beta}}}^{N}(T_{\beta - 1}, T_\beta) \right] \right|.
\end{equation}
We are now finally in a position where the results of Section~\ref{sect4} can be applied to show that \eqref{step3result} tends to zero as $N\to \infty$. In particular, for small values of $\kappa$, an overwhelming number of expectations in \eqref{step3result} will be zero, while for large values of $\kappa$ we can exploit the smallness of the probabilities $\P(X_{T}^{i_{0\ell}} > x_N)$.

\paragraph{Step 5: application of key lemmas.}
Our focus is on showing that \eqref{step3result} tends to zero as $N \to \infty$, and we recall that $\alpha$ and $k_1,\ldots,k_\alpha$ are fixed.

We first aim to apply Lemma~\ref{L_iterated_integrals} to assert that a large number of the expectations in \eqref{step3result} are in fact zero. We thus fix $\kappa \in [N]$ and instantiate the lemma with $G^{ij}$ as in \eqref{G}, the time points $T_0,\ldots,T_\alpha$, the natural numbers $k_1,\ldots,k_\alpha$, the $k_\beta$-tuples $\bm i_\beta, \bm j_\beta \in [N]^{k_\beta}$ for $\beta \in [\alpha]$, the subset $K = \{i_{01},\ldots,i_{0\kappa}\} \subset [N]$, and the bounded $\Fcal^K_T$-measurable random variable $\Psi = \prod_{\ell=1}^{\kappa} \bm 1_{\{X_{T}^{i_{0\ell}} > x_N\}}$. We must verify the conditions of Lemma~\ref{L_iterated_integrals}. It is clear that for each $i,j$, $G^{ij}$ is adapted to $(\Fcal^{\{i,j\}}_t)_{t \ge 0}$ and, thanks to Assumption~\ref{ass2} and the uniform boundedness of $D_3B^i$, belongs to $\mathbb L$. Indeed, a brief calculation yields
\begin{equation} \label{eq_Gij_integrability}
\E\left[ ( G^{ij}_t )^{2p} \right] \le 2 C p! K(t)^p
\end{equation}
for any $p \in \N$ and $t \in \R_+$, where $C$ is a uniform bound on $B_3B^i$ and $K(t)$ comes from Assumption~\ref{ass2}. Moreover, using that the $(X^i,W^i)$ are mutually independent and $D_3B^i$ is adapted to $(\Fcal^{\{i\}}_t)_{t \ge 0}$, one verifies that \eqref{L_iterated_integrals_eq_0} holds whenever $V \subset [N]$ and $i \in V$, $ j \notin V$.

Lemma~\ref{L_iterated_integrals} now tells us that the expectation in \eqref{step3result} vanishes whenever the subset $K=\{i_{01},\ldots,i_{0\kappa}\}$ and tuples $\bm i_1,\ldots,\bm i_\alpha, \bm j_1, \ldots, \bm j_\alpha$ satisfy at least one of the conditions \ref{L_iterated_integrals_1}--\ref{L_iterated_integrals_2} of the lemma. Thanks to Lemma~\ref{count}, for each $\kappa \in [N]$ there are at most
\[
\binom{N}{\kappa} \kappa (\kappa+1) \cdots (\kappa+S-1) (\kappa+S) N^{S-1}
\]
terms for which this is not the case, where we write $S = k_1 + \ldots + k_{\alpha}$. We claim, and will prove below, that each of these nonzero terms admits the bound
\begin{equation} \label{eq_bound_on_nonzero_terms}
\left| \mathbb{E} \left[ \prod_{\ell=1}^{\kappa} \bm1_{\{X_{T}^{i_{0\ell}} > x_N\}} \prod_{\beta = 1}^{\alpha} I_{{\bm i_{\beta}},{\bm j_{\beta}}}^{N}(T_{\beta - 1}, T_\beta) \right] \right|
\le
C \lceil \log N \rceil^{S}\left(\frac{C}{N}\right)^{\kappa (1 - 1/\log N)}
\end{equation}
for a constant $C$ that does not depend on $N$. Combining these two facts we upper bound \eqref{step3result} by
\begin{align*}
&\frac{1}{N^{k_1 + \cdots + k_\alpha}} \sum_{\kappa=1}^N  \binom{N}{\kappa} \kappa (\kappa+1) \cdots (\kappa+S) N^{S-1} C \lceil \log N \rceil^{S}\left(\frac{C}{N}\right)^{\kappa (1 - 1/\log N)} \\
&\quad = \frac{C \lceil \log N \rceil^{S}}{N} \sum_{\kappa=1}^N  \binom{N}{\kappa} \kappa (\kappa+1) \cdots (\kappa+S) \left(\frac{C}{N}\right)^{\kappa (1 - 1/\log N)}.
\end{align*}
Thanks to Lemma~\ref{combinatoryinequality}, this is in turn bounded by
\[
\frac{C \lceil \log N \rceil^{S}}{N} (S+2)(S+1)^{2(S+1)}e^{Ce}(Ce)^{S+1},
\]
which tends to zero as $N \to \infty$. Tracing backwards, we deduce that \eqref{step3result} and hence \eqref{step3expansion} tends to zero as well. This is true for any choice of $(k_1,\ldots,k_\alpha)$, showing that \eqref{step2result} tends to zero. As a result, we see from \eqref{AnalyticStep2Claim} that $\limsup_{N\to\infty} |A^N_\alpha| \le \alpha \varepsilon$ and thus $A^N_\alpha \to 0$ since $\varepsilon > 0$ was arbitrary. We recall from Step~1 that it was enough to obtain this for any $\alpha \in [n]$ in order to prove the theorem.

It still remains to establish \eqref{eq_bound_on_nonzero_terms}. Applying H\"older's inequality with exponents $p_N=\log N$ and $q_N = (1-1/\log N)^{-1}$ gives
\begin{align*}
\left| \mathbb{E} \left[ \prod_{\ell=1}^{\kappa} \bm1_{\{X_{T}^{i_{0\ell}} > x_N\}} \prod_{\beta = 1}^{\alpha} I_{{\bm i_{\beta}},{\bm j_{\beta}}}^{N}(T_{\beta - 1}, T_\beta) \right] \right|
&\le
\mathbb{E} \left[ \prod_{\ell=1}^{\kappa} \bm1_{\{X_{T}^{i_{0\ell}} > x_N\}} \right]^{\frac{1}{q_N}} \prod_{\beta = 1}^{\alpha} \left\| I_{{\bm i_{\beta}},{\bm j_{\beta}}}^{N}(T_{\beta - 1}, T_\beta) \right\|_{\alpha p_N} \\
&= \P\left( X_T > x_N \right)^{\kappa/q_N} \prod_{\beta = 1}^{\alpha} \left\| I_{{\bm i_{\beta}},{\bm j_{\beta}}}^{N}(T_{\beta - 1}, T_\beta) \right\|_{\alpha p_N}.
\end{align*}
Using Lemma~\ref{lpestimationiterated} with $k = k_{\beta}$ and $M_{t}^{\ell} = \int_0^{t}G^{i_{\beta\ell},j_{\beta\ell}}_s dW_s^{i_{\beta\ell}}$ for $\ell \in [k_{\beta}]$ we bound
\[
\left\|  I_{{\bm i_{\beta}},{\bm j_{\beta}}}^{N}(T_{\beta - 1}, T_\beta)  \right\|_p\leq (4\sqrt{p})^{k_\beta} 2^{k_\beta(k_\beta-1)/4}\prod_{\ell=1}^{k_\beta}\|\langle M^{\ell} \rangle_{T_{\beta-1},T_\beta}^{1/2}\|_{2^{\ell}p}
\]
for any $p \in \N$. Moreover, from H\"older's inequality and \eqref{eq_Gij_integrability}, and recalling also that $T_\beta - T_{\beta-1} ~=~ T/n$ (see Step~1), we get
\begin{align*}
\|\langle M^{\ell} \rangle_{T_{\beta-1},T_\beta}^{1/2}\|_{2^{\ell}p}
&\le
\left( \frac{T}{n} \right)^{\frac{1}{2} - \frac{1}{2^\ell p}} \left( \int_{T_{\beta-1}}^{T_\beta} \E\left[ ( G^{ij}_s )^{2^\ell p} \right] ds \right)^{1/(2^\ell p)} \\
&\le \left( \frac{T}{n} \right)^{1/2} (2 C (2^{\ell-1}p)! )^{1/(2^\ell p)} \sup_{t \le T} K(t)^{1/2} \\
&\le C \sqrt{p},
\end{align*}
where in the last step we used Stirling's approximation and where $C$ (which as per our conventions may change from one occurrence to the next) does not depend on $p$ or $N$. We deduce that
\[
\left\|  I_{{\bm i_{\beta}},{\bm j_{\beta}}}^{N}(T_{\beta - 1}, T_\beta)  \right\|_p\leq C (4\sqrt{p})^{k_\beta} \prod_{\ell=1}^{k_\beta} \sqrt{p} = C p^{k_\beta}.
\]
Choosing $p = \alpha \lceil p_N \rceil$ we apply the above bounds to obtain
\begin{align*}
\left| \mathbb{E} \left[ \prod_{\ell=1}^{\kappa} \bm1_{\{X_{T}^{i_{0\ell}} > x_N\}} \prod_{\beta = 1}^{\alpha} I_{{\bm i_{\beta}},{\bm j_{\beta}}}^{N}(T_{\beta - 1}, T_\beta) \right] \right|
&\le \P\left( X_T > x_N \right)^{\kappa/q_N} \prod_{\beta = 1}^{\alpha} C (\alpha \lceil p_N \rceil)^{k_\beta} \\
&= C \lceil p_N \rceil^{S}\, \P\left( X_T > x_N \right)^{\kappa/q_N}.
\end{align*}
All that remains in order to establish \eqref{eq_bound_on_nonzero_terms} is to show that $\P( X_T > x_N) \le C/N$. But this follows from the fact that $(1 - \P(X_T > x_N))^N = \P(\max_{i \le N} X^i_T \le a^N_T x + b^N_T) \to \Gamma_T(x) > 0$ by assumption, so that $N \P(X_T > x_N) \le - N \log(1-\P(X_T > x_N)) \le C$ for some constant $C$ that does not depend on $N$. This completes the proof of \eqref{eq_bound_on_nonzero_terms}, and of the theorem.

\bibliography{references}
\bibliographystyle{abbrv}

\end{document}